\newtheorem{prethm}{{\bf Theorem}}
\newenvironment{thm}{\begin{prethm}\sl{\hspace{-0.5
               em}{\bf.}}}{\end{prethm}}
\newtheorem{prepro}[prethm]{{\bf Proposition}}
\newtheorem{prelem}[prethm]{{\bf Lemma}}
\newenvironment{lem}{\begin{prelem}\sl{\hspace{-0.5
               em}{\bf.}}}{\end{prelem}}
\newtheorem{predeff}[prethm]{{\bf Definition}}
\newenvironment{deff}{\begin{predeff}\rm{\hspace{-0.5
               em}{\bf.}}}{\end{predeff}}
\newtheorem{precor}[prethm]{{\bf Corollary}}
\newtheorem{preconj}[prethm]{{\bf Conjecture}}
\newtheorem{preremark}[prethm]{{\bf Remark}}
\newenvironment{remark}{\begin{preremark}\rm{\hspace{-0.5
               em}{\bf.}}}{\end{preremark}}
\newtheorem{preexample}[prethm]{{\bf Example}}
\newtheorem{prethmm}{{\bf Theorem}}
\newtheorem{preproof}{{\bf\textsf{Proof.}}}
\newenvironment{proof}[1]{\begin{preproof}{\rm
               #1}\hfill{$\Box$}}{\end{preproof}}
\newcommand{\bmi}[1]{\mbox{\boldmath $ #1$}}
\def\bbordermatrix#1{\begingroup \m@th
  \@tempdima 4.75\p@
  \setbox\z@\vbox{%
    \def\cr{\crcr\noalign{\kern2\p@\global\let\cr\endline}}%
    \ialign{$##$\hfil\kern2\p@\kern\@tempdima&\thinspace\hfil$##$\hfil
      &&\quad\hfil$##$\hfil\crcr
      \omit\strut\hfil\crcr\noalign{\kern-\baselineskip}%
      #1\crcr\omit\strut\cr}}%
  \setbox\tw@\vbox{\unvcopy\z@\global\setbox\@ne\lastbox}%
  \setbox\tw@\hbox{\unhbox\@ne\unskip\global\setbox\@ne\lastbox}%
  \setbox\tw@\hbox{$\kern\wd\@ne\kern-\@tempdima\left[\kern-\wd\@ne
    \global\setbox\@ne\vbox{\box\@ne\kern2\p@}%
    \vcenter{\kern-\ht\@ne\unvbox\z@\kern-\baselineskip}\,\right]$}%
  \null\;\vbox{\kern\ht\@ne\box\tw@}\endgroup}
\title{\bf\LARGE  Maximum order   of triangle-free graphs with a given rank\\ \vspace{1cm}}
\author{\large E. Ghorbani$^{\,\rm 1, 2}$ \quad \quad  A. Mohammadian$^{\,\rm 2}$ \quad \quad B. Tayfeh-Rezaie$^{\,\rm 2}$\\[.4cm]
{\sl $^{\rm 1}$Department of Mathematics, K.N. Toosi University of Technology,}\\
{\sl P.O. Box 16315-1618, Tehran, Iran}\\[0.3cm]
{\sl $^{\rm 2}$School of Mathematics, Institute for Research in Fundamental
Sciences (IPM),}\\{\sl P.O. Box
19395-5746, Tehran, Iran }
\\[0.5cm]{
$\mathsf{e\_ghorbani@ipm.ir}$ \quad\quad  $\mathsf{ali\_m@ipm.ir}$ \quad\quad  $\mathsf{tayfeh}$-$\mathsf{r@ipm.ir}$}}
\date{}
\begin{document}
\maketitle

\vspace{5mm}

\begin{abstract}
The rank of a graph is defined to be the rank of its adjacency matrix.
A graph is called  reduced if it has no isolated vertices and no two vertices  with the same set of neighbors.
We determine the maximum order of  reduced  triangle-free graphs with a
given rank and characterize all such  graphs achieving the maximum order.

\vspace{5mm}
\noindent {\bf Keywords:}   rank, triangle-free  graph, adjacency matrix \\[.1cm]
\noindent {\bf AMS Mathematics Subject Classification\,(2010):}   05C50, 05C75, 15A03
\end{abstract}

\vspace{5mm}

\section{Introduction}

For a  graph $G$, we denote  by  $V(G)$    the vertex set of $G$. The   {\sl order} of $G$ is defined as  $|V(G)|$. Let    $V(G)=\{v_1, \ldots , v_n\}$. The {\sl adjacency matrix} of $G$ is an $n \times  n$
 matrix $A(G)$ whose $(i, j)$-entry is $1$ if $v_i$ is adjacent to $v_j$ and  $0$ otherwise.
The {\sl rank} of  $G$, denoted by $\mathrm{rank}(G)$,  is the  rank of $A(G)$.

For a vertex $v$ of $G$, let $N(v)$ denote  the set of
all vertices of $G$ adjacent to $v$.  The {\sl degree} of  $v$ is defined as $|N(v)|$.
We say that  $G$ is {\sl reduced} if it has no isolated vertex  and no two vertices $u,v$ with $N(u)=N(v)$. Indeed, adding an isolated vertex or introducing a new vertex with the same neighbor set as an  existing  vertex  does not change the rank.
Let  $r\geqslant2$ be an integer. It is straightforward   to see  that every  reduced  graph of   rank $r$ has at most $2^r-1$ vertices  \cite{akb}. Let $m(r)$ be  the maximum possible order of a reduced  graph of  rank $r$.    Kotlov and   Lov\'asz   \cite{lov} proved that  there exists  a  constant $c$  such  that $m(r)\leqslant c\cdot2^{r/2}$ and for any  $r\geqslant2$ they  constructed a graph of rank $r$ and order
\begin{eqnarray*}
\mu(r)=\left\{\begin{array}{ll}
2^{(r+2)/2}-2 & \text{if $r$ is even}, \\
5\cdot2^{(r-3)/2}-2 & \text{if $r>1$ is odd}.
\end{array}\right.
\end{eqnarray*}
Akbari, Cameron  and  Khosrovshahi   \cite{akb}  conjectured that in fact $m(r)=\mu(r)$.
Haemers and  Peeters  \cite{ham}  proved the conjecture for  graphs  containing  an induced matching of size $r/2$  or an induced subgraph consisting  of a  matching of size $(r-3)/2$ and a cycle of length $3$.
Royle   \cite{roy} proved that the rank of every reduced graph containing no path of length $3$ as an
induced subgraph is equal to the order.

We   proved in \cite{gmt}    that every reduced tree  of rank $r$  has at most $t(r)=3r/2-1$ vertices and  characterized  all reduced  trees  of rank $r$ and order  $t(r)$. It  was also  shown    that every reduced bipartite graph  of rank $r$  has at most $b(r)=2^{r/2}+r/2-1$ vertices and  all reduced   bipartite graphs  achieving this bound were  determined.   Note that the rank of a bipartite graph is always  even. In this article, we prove  that  every reduced non-bipartite triangle-free graph  of rank $r$  has at most $c(r)=3\cdot2^{\lfloor r/2\rfloor-2}+\lfloor r/2\rfloor$ vertices and  characterize all reduced non-bipartite triangle-free graphs  of rank $r$ and order $c(r)$.

\section{Preliminaries}

For  a  graph $G$, a subset $S$ of  $V(G)$ with $|S|>1$  is called a {\sl duplication  class} of $G$  if  $N(u)=N(v)$,  for every   $u, v\in  S$.
For  a subset  $X$  of   $V(G)$,  the notation $G-X$ represents
the subgraph obtained by removing the vertices in $X$  from $G$.

\begin{lem}\label{jad} {\rm\cite{kot, lov}}
For  any  reduced graph $G$,    the following hold.
\begin{itemize}
\item [{\rm (i)}] For every vertex $v\in V(G)$, $\mathrm{rank}(G-N(v))\leqslant\mathrm{rank}(G)-2$.
\item [{\rm (ii)}]  For every non-adjacent vertices $u,v\in V(G)$, $\mathrm{rank}(G-(N(u)\triangle N(v)))\leqslant\mathrm{rank}(G)-2$, where
$\triangle$ denotes the  symmetric difference.
\end{itemize}
\end{lem}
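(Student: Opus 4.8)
The plan is to prove both parts by the same device: inside the adjacency matrix $A=A(G)$ I will locate two linearly independent rows beyond a row-basis of the relevant deleted subgraph. Throughout I work over $\mathbb{R}$ and use freely that deleting a set of indices together with the corresponding columns from a symmetric matrix again yields a symmetric principal submatrix, and that elementary row and column operations preserve rank.

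For (i), fix $v$ and set $N=N(v)$ and $R=V(G)\setminus(\{v\}\cup N)$; since $G$ is reduced it has no isolated vertex, so $N\ne\emptyset$. Ordering the vertices as $v,N,R$, the row of $v$ vanishes on the columns indexed by $\{v\}\cup R$, is nonzero on the columns indexed by $N$, and the $(v,r)$-entries with $r\in R$ are $0$. Deleting the rows and columns indexed by $N$ leaves the principal submatrix on $\{v\}\cup R$, in which $v$ is isolated; hence $\mathrm{rank}(G-N(v))=\mathrm{rank}(D)$, where $D=A(G[R])$. I then pick $S\subseteq R$ indexing a row-basis of $D$, so the full rows $\{A_s:s\in S\}$ of $A$ are independent, being already independent on the $R$-columns. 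Next I adjoin two rows. The $v$-row is independent of $\{A_s:s\in S\}$: restricting any dependence to the $R$-columns forces all coefficients to vanish (as $\{D_s\}$ is independent), whereupon the nonzero $N$-part of the $v$-row gives a contradiction. Finally, any single row $A_w$ with $w\in N$ is independent of these, since it has a nonzero entry in column $v$ while every $A_s$ $(s\in R)$ and the $v$-row are zero there. This produces $\mathrm{rank}(D)+2$ independent rows, i.e. $\mathrm{rank}(G)\ge\mathrm{rank}(G-N(v))+2$.

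For (ii) the same bookkeeping cannot be run directly, because the retained vertices now have entries in the deleted columns; this is the main obstacle, and I would bypass it by reducing (ii) to the mechanism of (i) through an augmentation. Partition $V(G)$ into $\{u,v\}$, $W=N(u)\cap N(v)$, $X=N(u)\setminus N(v)$, $Y=N(v)\setminus N(u)$, and a remainder $Z$, so that $N(u)\triangle N(v)=X\cup Y$. Let $b=\mathrm{col}_u(A)-\mathrm{col}_v(A)$; a direct sign check shows $b$ is supported exactly on $X\cup Y$ (value $+1$ on $X$ and $-1$ on $Y$), and that $b_u=b_v=0$ because $u\not\sim v$ and there are no loops. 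Form the symmetric matrix $\hat A$ obtained from $A$ by appending one new index $z$ whose off-diagonal entries form the vector $b$ and whose diagonal entry is $0$. Since $b\in\mathrm{col}(A)$, subtracting $\mathrm{col}_u-\mathrm{col}_v$ from the new column, followed by the symmetric row operation, clears the last column and row, the corner entry vanishing precisely because $b_u=b_v=0$; hence $\mathrm{rank}(\hat A)=\mathrm{rank}(A)$.

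It remains to observe that $\hat A$ behaves exactly like an adjacency matrix whose distinguished index $z$ has ``neighbourhood'' $\mathrm{supp}(b)=X\cup Y$. Deleting the rows and columns indexed by $X\cup Y$ isolates $z$ and turns $u,v$ into twins with common neighbourhood $W$, so the resulting principal submatrix has the same rank as $A(G-(N(u)\triangle N(v)))$. Moreover $X\cup Y\ne\emptyset$, since $G$ is reduced and $u\ne v$ force $N(u)\ne N(v)$. Therefore the verbatim linear-algebraic argument of part (i), applied to $\hat A$ and the index $z$ (whose row is zero off $\mathrm{supp}(b)$, nonzero on $\mathrm{supp}(b)$, with some $w\in\mathrm{supp}(b)$ having a nonzero $(w,z)$-entry), gives $\mathrm{rank}(\hat A-(X\cup Y))\le\mathrm{rank}(\hat A)-2$. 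Combining the three displayed identities yields $\mathrm{rank}(G-(N(u)\triangle N(v)))\le\mathrm{rank}(G)-2$. The only points demanding care are the sign computation identifying $\mathrm{supp}(b)$ and the vanishing corner entry, and invoking ``reduced'' exactly where the nonemptiness of $N(v)$ and of $N(u)\triangle N(v)$ is needed.
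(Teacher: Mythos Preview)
The paper does not supply its own proof of this lemma; it is quoted from \cite{kot,lov} without argument. Your proof is correct. Part~(i) is the standard linear-algebra count, and your augmentation device for~(ii)---appending a virtual vertex $z$ whose row is $\mathrm{col}_u(A)-\mathrm{col}_v(A)$ so that~(i) applies verbatim---is a clean reduction; the rank check $\mathrm{rank}(\hat A)=\mathrm{rank}(A)$ and the vanishing corner entry are handled correctly.

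One remark: the augmentation is not strictly necessary. A direct proof of~(ii) in the same spirit as~(i) runs as follows. With $T=N(u)\triangle N(v)$, $R=V(G)\setminus T$, $D=A(G[R])$, pick $S\subseteq R$ indexing a row-basis of $D$. Since $u,v\in R$ are twins in $G[R]$, the vector $A_u-A_v$ is supported on $T$ and is nonzero (as $G$ is reduced); restricting to the $R$-columns shows it is independent of $\{A_s:s\in S\}$. For the last vector, take any $w\in T$ and evaluate the linear functional ``column $u$ minus column $v$'': it kills every $A_s$ (since $s\notin T$ is adjacent to both or neither of $u,v$) and kills $A_u-A_v$ (since $u\not\sim v$), but gives $\pm1$ on $A_w$. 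This yields $\mathrm{rank}(D)+2$ independent vectors in the row space of $A$, avoiding the enlarged matrix entirely. Either route is fine.
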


The following lemma has a key role  in our proofs.

\begin{lem}\label{lov}
Let $G$ be a reduced graph  and $H$ be an induced  subgraph of $G$ with the  maximum  possible  order subject to    $\mathrm{rank}(H)<\mathrm{rank}(G)$. Then  $\mathrm{rank}(H)\geqslant\mathrm{rank}(G)-2$ and the equality occurs if $H$ is not reduced. Moreover,  the following  properties hold.
\begin{itemize}
\item [{\rm (i)}] $|V(G)\setminus V(H)|\leqslant\min\{|N(u)\triangle N(v)|\, |\,  u, v\in V(G)\}\cup\{|N(u)|\, |\, u\in V(G)\}$.
\item [{\rm (ii)}]   If $w$   is  an isolated vertex of $H$, then $N(w)=V(G)\setminus V(H)$.
\item [{\rm (iii)}] Each   duplication  class of  $H$  has    two elements and $H$ has at most  one isolated vertex.
\item [{\rm (iv)}]   One may label the duplication classes of $H$, if any,  as    $\{v_1, v_1'\}, \ldots, \{v_s, v_s'\}$ so that  there exist two  disjoint sets $T_1$ and  $T_2$  such that $V(G-H)=T_1\cup T_2$, $T_1\subseteq  N(v_i)\setminus  N(v_i')$ and  $T_2\subseteq  N(v_i')\setminus  N(v_i)$, for all $i\in\{1, \ldots, s\}$.
\end{itemize}
Furthermore, if   $H$  is    an induced  subgraph of $G$ with the  maximum  possible  order subject to    $\mathrm{rank}(H)\leqslant\mathrm{rank}(G)-2$, then    $\mathrm{rank}(H)\geqslant\mathrm{rank}(G)-3$ and the properties (i){\bf--}(iv) also   hold.
\end{lem}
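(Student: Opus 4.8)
The plan is to write $X:=V(G)\setminus V(H)$, which is nonempty since $\mathrm{rank}(H)<\mathrm{rank}(G)$, and to use two standard facts about a real symmetric matrix $M$: deleting a row together with its corresponding column changes $\mathrm{rank}(M)$ by at most $2$; and if $M'$ is obtained from $M$ by adjoining one new row and column, equal to a vector $b$ with zero diagonal entry, then $\mathrm{rank}(M')=\mathrm{rank}(M)+2$ when $b\notin\mathrm{Col}(M)$ and $\mathrm{rank}(M')\le\mathrm{rank}(M)+1$ when $b\in\mathrm{Col}(M)$. For a vertex $x\in X$ the graph $H+x$ induced on $V(H)\cup\{x\}$ has more vertices than $H$, so by maximality $\mathrm{rank}(H+x)\ge\mathrm{rank}(G)$, hence equals $\mathrm{rank}(G)$; deleting $x$ again yields $\mathrm{rank}(H)\ge\mathrm{rank}(G)-2$. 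For the clause ``equality occurs if $H$ is not reduced'' I would argue the contrapositive: if $\mathrm{rank}(H)=\mathrm{rank}(G)-1$, then adjoining any $x\in X$ raises the rank by exactly $1$, so $b_x$ (the indicator vector of $N_G(x)\cap V(H)$) lies in $\mathrm{Col}(A(H))$. But if $H$ had an isolated vertex $w$, then every vector of $\mathrm{Col}(A(H))$ would vanish in the $w$-coordinate, contradicting $b_x$ for any $x\in N_G(w)\subseteq X$ (such $x$ exists because $G$ is reduced); and if $H$ had distinct vertices $v,v'$ with $N_H(v)=N_H(v')$ (necessarily non-adjacent), then every vector of $\mathrm{Col}(A(H))$ would have equal $v$- and $v'$-coordinates, contradicting $b_y$ for any $y\in X$ separating $v,v'$ in $G$. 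Hence $H$ is reduced.

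Items (i)--(iii) come from Lemma~\ref{jad} combined with the maximality of $H$. By Lemma~\ref{jad}, $G-N(v)$ and (for non-adjacent $u,v$) $G-(N(u)\triangle N(v))$ have rank at most $\mathrm{rank}(G)-2<\mathrm{rank}(G)$, hence at most $|V(H)|$ vertices, which is (i). If $w$ is isolated in $H$ then $N_G(w)\subseteq X$, while $|V(G-N(w))|\le|V(H)|$ gives $|N_G(w)|\ge|X|$, so $N_G(w)=X$, proving (ii); consequently two isolated vertices of $H$ would share the neighbourhood $X\neq\emptyset$, impossible in a reduced graph. The same maximality argument applied to $G-(N(v)\triangle N(v'))$ shows that $N_H(v)=N_H(v')$ (with $v,v'$ distinct, hence non-adjacent) implies $N_G(v)\triangle N_G(v')=X$; three vertices $u_1,u_2,u_3$ in one duplication class would then give $N_G(u_2)\triangle N_G(u_3)=\big(N_G(u_1)\triangle N_G(u_2)\big)\triangle\big(N_G(u_1)\triangle N_G(u_3)\big)=\emptyset$, impossible. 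This finishes (iii), and also gives the first half of (iv): for each duplication pair $\{v_i,v_i'\}$, every $x\in X$ is adjacent to exactly one of $v_i,v_i'$.

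For the rest of (iv) (which is void if $H$ has no duplication classes) I would encode, for each pair $\{v_i,v_i'\}$, the vector $d_i\in\{1,-1\}^{X}$ given by $(d_i)_x=1$ if $x\sim v_i$ and $(d_i)_x=-1$ if $x\sim v_i'$; then (iv) asserts precisely that, after possibly swapping $v_i\leftrightarrow v_i'$ within pairs, all the $d_i$ coincide (one then takes $T_1=N_G(v_1)\setminus N_G(v_1')$ and $T_2=X\setminus T_1$). Writing $A(G)$ in block form with diagonal blocks $A(H)$ and $A(G[X])$ and off-diagonal block $B$ (the $V(H)\times X$ bipartite adjacency matrix), one has $e_{v_i}-e_{v_i'}\in\ker A(H)$ because the rows of $A(H)$ indexed by $v_i$ and $v_i'$ are equal, and $B^{\top}(e_{v_i}-e_{v_i'})=d_i$; those equal rows also force $b_x\notin\mathrm{Col}(A(H))$ for every $x\in X$ (since $x$ separates $v_i,v_i'$), whence $\rho:=\mathrm{rank}\,[\,A(H)\mid B\,]\ge\mathrm{rank}(H)+1$. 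Comparing $\dim\ker A(G)$ with $\dim\ker A(H)$ through the projection $\ker A(G)\to\mathbb{R}^{X}$, whose image lies in $\{z:Bz\in\mathrm{Col}(A(H))\}$, I would deduce that the image of the map $\phi\colon\ker A(H)\to\mathbb{R}^{X}$, $z\mapsto B^{\top}z$, has dimension at most $k-(\rho-\mathrm{rank}(H))\le k-1$, where $k:=\mathrm{rank}(G)-\mathrm{rank}(H)\in\{2,3\}$. Since $\mathrm{Im}\,\phi$ contains all the $d_i$: if $\dim\mathrm{Im}\,\phi\le1$ the $d_i$ lie on a line and, being $\pm1$-vectors, agree up to sign, and we are done. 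If $\dim\mathrm{Im}\,\phi=2$ then necessarily $k=3$ and $\rho=\mathrm{rank}(H)+1$, so fixing $x_0\in X$ we can write each $b_x=u_x+\lambda_x b_{x_0}$ with $u_x\in\mathrm{Col}(A(H))$ and $\lambda_x\neq0$; reading off the $v_i$- and $v_i'$-coordinates gives $(d_i)_x=\lambda_x(d_i)_{x_0}$, so $\lambda_x=(d_i)_x(d_i)_{x_0}$ does not depend on $i$, and again all $d_i$ agree up to sign.

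The ``furthermore'' statement is proved by running the same arguments with ``$\mathrm{rank}(H)\le\mathrm{rank}(G)-2$'' replacing ``$\mathrm{rank}(H)<\mathrm{rank}(G)$'': the one-vertex estimate now yields only $\mathrm{rank}(H)\ge\mathrm{rank}(G)-3$, and the proofs of (i)--(iv) are verbatim, the sole new feature being the possibility $k=3$. I expect the main obstacle to be exactly the ``consistency'' half of (iv)---turning ``each $x$ separates each pair'' into one simultaneous separation; when the rank drops by $3$, the plain dimension count only delivers $\dim\mathrm{Im}\,\phi\le2$, and ruling out the two-dimensional case requires the decomposition $b_x=u_x+\lambda_x b_{x_0}$ above rather than counting alone. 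Everything else is routine manipulation with Lemma~\ref{jad} and the two matrix facts.
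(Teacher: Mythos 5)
Your proposal is correct, but it takes a genuinely different route at the two places where the paper actually does work. For the main body of the lemma the paper simply cites Kotlov--Lov\'asz; you reprove it from scratch using the two rank-perturbation facts, and your contrapositive argument that $\mathrm{rank}(H)=\mathrm{rank}(G)-1$ forces $H$ to be reduced (every $b_x$ must lie in $\mathrm{Col}(A(H))$, which an isolated vertex or a duplicate pair of $H$ would forbid) is a clean self-contained substitute. For the bound $\mathrm{rank}(H)\geqslant\mathrm{rank}(G)-3$ in the ``furthermore'' part, the paper passes through an intermediate subgraph $H_1$ of maximum order with $\mathrm{rank}(H_1)<\mathrm{rank}(G)$ and splits on whether its rank is $\mathrm{rank}(G)-1$ or $\mathrm{rank}(G)-2$; your one-vertex augmentation ($\mathrm{rank}(H+x)\geqslant\mathrm{rank}(G)-1$ by maximality, then delete $x$) is shorter and gives the same bound. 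The real divergence is in (iv): the paper assumes consistency fails and exhibits an explicit principal submatrix --- two duplication pairs together with two outside vertices separating them incompatibly --- whose extra rows and columns are independent modulo $A(H)$, forcing $\mathrm{rank}(G)\geqslant\mathrm{rank}(H)+4$; you instead bound the image of $\phi\colon\ker A(H)\to\mathbb{R}^{X}$ globally via the Frobenius-type inequality $\mathrm{rank}(G)\geqslant 2\,\mathrm{rank}[A(H)\mid B]-\mathrm{rank}(H)$ and note that all the $\pm1$ separation vectors $d_i$ lie in that image. Your route in fact yields slightly more than you use: since $\dim\mathrm{Im}\,\phi=\mathrm{rank}[A(H)\mid B]-\mathrm{rank}(H)$ exactly, the inequality gives $\dim\mathrm{Im}\,\phi\leqslant k/2\leqslant 1$ even when $k=3$, so your two-dimensional case is vacuous and the decomposition $b_x=u_x+\lambda_x b_{x_0}$ is never needed (though the argument you give for it is valid). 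The paper's local submatrix computation is more elementary and avoids the kernel bookkeeping; your version unifies the $k=2$ and $k=3$ cases. Parts (i)--(iii) are handled essentially as in the paper, though your derivation of $N_G(v)\triangle N_G(v')=V(G)\setminus V(H)$ for duplicate pairs followed by the symmetric-difference cancellation for a three-element class is a tidier version of the paper's case analysis on which symmetric differences contain a given outside vertex.
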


\begin{proof}{
If $H$ is  an induced  subgraph of $G$ with the  maximum  possible  order subject to    $\mathrm{rank}(H)<\mathrm{rank}(G)$, then
the  statements (i){\bf--}(iv) can be found among  the  results of \cite{kot} and also  \cite{lov}. In order to prove  the rest of the  assertion, we let $H$ be an induced  subgraph of $G$ with the  maximum  possible  order subject to  $\mathrm{rank}(H)\leqslant\mathrm{rank}(G)-2$. We first establish  that  $\mathrm{rank}(H)\geqslant\mathrm{rank}(G)-3$. Assume that
$H_1$  is an induced  subgraph of $G$  with the  maximum  possible  order subject to    $\mathrm{rank}(H_1)<\mathrm{rank}(G)$.
If $\mathrm{rank}(H_1)=\mathrm{rank}(G)-2$, then we clearly have  $\mathrm{rank}(H)=\mathrm{rank}(H_1)$.
Also, if $\mathrm{rank}(H_1)=\mathrm{rank}(G)-1$, then by the first part of the lemma,   $H_1$ is reduced and so $\mathrm{rank}(H_2)\geqslant\mathrm{rank}(G)-3$,  where $H_2$  is an induced  subgraph of $H_1$  with the  maximum  possible  order subject to    $\mathrm{rank}(H_2)<\mathrm{rank}(H_1)$.
It follows  that $\mathrm{rank}(H)\geqslant\mathrm{rank}(G)-3$.
By the definition of $H$ and by Lemma \ref{jad},  (i) and  hence (ii) is  valid . For (iii),  let  $H$ have  a  duplication  class containing three distinct vertices $x, y, z$. Clearly,   for every vertex $t\in V(G)\setminus V(H)$, at least one of the three symmetric differences of  $N(x), N(y), N(z)$ does not contain $t$. This  contradicts (i). The second statement  of (iii) follows from (ii), since $G$ is reduced.
For (iv),  note first that,   by the definition of $H$, any vertex in   $V(G)\setminus V(H)$ is  adjacent to exactly one vertex in each duplication class, since for any duplication class $\{x, y\}$ in $H$, we have $N(x)\triangle N(y)\subseteq H$.
If (iv) does not hold, then
$A(G)$ contains
\begin{equation}\label{abo}\left[
\begin{array}{c|c}
\begin{array}{c|c|c|c|c}
&  &  &  &  \\
&  &  &  &  \\
\bmi{x} & \bmi{x} & \bmi{y} &  \bmi{y}& \bmi{\star}  \\
&  &  &  &  \\
&  &  &  &
\end{array}
&
\begin{array}{cc} 1&1\\0&0\\1&0\\0&1\\ \star & \star \vspace{1mm}\end{array}
\\\hline
\begin{array}{ccccc} 1\,{} & 0\,{} & 1&{}\,0& {}\,\bmi{\star}\\1\,{} &  0\,{}  & 0&{}\,1& {}\,\bmi{\star}\end{array}
&
\begin{array}{cc}0& \star \\ \star&0 \end{array}
\end{array}
\right]
\end{equation}
as a principle submatrix, where  the     upper-left  corner of (\ref{abo}) is $A(H)$. This yields  that $\mathrm{rank}(H)\leqslant\mathrm{rank}(G)-4$, a contradiction.
}\end{proof}

For any
graph $G$, a subset $X$ of $V(G)$ is called  {\sl independent} if the induced subgraph
on $X$ has no edges. The maximum size of an independent set in a graph $G$ is
called the {\sl independence number} of $G$ and is denoted by $\alpha(G)$.
We will make   use of the following lemma which is an immediate consequence of     the Plotkin bound \cite[p.\,58]{huf}  from coding theory  and was  also  established   in \cite{gmt} by a direct proof.

\begin{lem}\label{in}
Let  $G$ be a  graph of order $n$ and let $S$ be an independent set  in $G$ with  $|S|\geqslant2$.  Then
 $$\min\big\{|N(u)\triangle N(v)| \, \big|\,  u, v \in S, u\neq v\big\}\leqslant\frac{|S|\big(n-|S|\big)}{2\big(|S|-1\big)}.$$
\end{lem}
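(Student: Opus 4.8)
The plan is to prove the inequality by the same double-counting argument that underlies the Plotkin bound, counting incidences between the vertices outside $S$ and the pairs inside $S$. First I would record the one place where the independence of $S$ enters: since $S$ has no edges, every vertex $u\in S$ satisfies $N(u)\subseteq V(G)\setminus S$, and hence for any two distinct $u,v\in S$ the symmetric difference $N(u)\triangle N(v)$ is contained in $W:=V(G)\setminus S$, a set of size $n-|S|$. Since the minimum of a finite collection of numbers is at most their average, it suffices to bound from above the average of the $\binom{|S|}{2}$ quantities $|N(u)\triangle N(v)|$ with $u,v\in S$.

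To get at that average, I would count in two ways the number $N$ of pairs $\big(w,\{u,v\}\big)$ with $w\in W$, $\{u,v\}$ an unordered pair from $S$, and $w\in N(u)\triangle N(v)$. Summing over the pairs $\{u,v\}$ first gives $N=\sum_{\{u,v\}\subseteq S}|N(u)\triangle N(v)|$. Summing over $w$ first: writing $d_w$ for the number of neighbours of $w$ lying in $S$, the vertex $w$ belongs to $N(u)\triangle N(v)$ exactly for those pairs in which precisely one of $u,v$ is adjacent to $w$, and there are $d_w(|S|-d_w)$ such pairs; hence $N=\sum_{w\in W}d_w(|S|-d_w)$.

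Now I would apply the elementary bound $d_w(|S|-d_w)\leqslant|S|^2/4$, valid by AM--GM since $0\leqslant d_w\leqslant|S|$, to obtain $N\leqslant(n-|S|)\,|S|^2/4$. Dividing by the number $\binom{|S|}{2}=|S|(|S|-1)/2$ of pairs, the average (and therefore the minimum) of $|N(u)\triangle N(v)|$ over pairs in $S$ is at most
$$\frac{(n-|S|)\,|S|^2/4}{|S|(|S|-1)/2}=\frac{|S|\,(n-|S|)}{2(|S|-1)},$$
which is exactly the asserted inequality. There is no substantial obstacle in this argument; the only steps requiring any care are the observation that $N(u)\triangle N(v)$ misses $S$ entirely (which is precisely where the hypothesis that $S$ is independent is used) and the passage from the minimum to the average, both of which are immediate. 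One could alternatively simply invoke the Plotkin bound from coding theory by regarding the rows of the $S\times W$ submatrix of $A(G)$ as binary codewords, but the direct count above is self-contained and just as short.
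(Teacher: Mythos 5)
Your proof is correct and is exactly the averaging/double-counting argument behind the Plotkin bound that the paper invokes (and that the cited direct proof in \cite{gmt} carries out): independence of $S$ puts all the symmetric differences inside $V(G)\setminus S$, and the count $\sum_{w}d_w(|S|-d_w)\leqslant (n-|S|)|S|^2/4$ followed by min $\leqslant$ average gives the stated bound. No gaps; this matches the paper's intended derivation.
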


In the following, we recall   the  Singleton bound \cite[p.\,71]{huf} from coding theory.

\begin{thm}\label{single}
Let $n$ be  a positive integer and  $\mathnormal{\Omega}$ be the set of all $(0, 1)$-vectors of length $n$.  Let  $C$ be a subset of $\mathnormal{\Omega}$ so that
every pair of the vectors in  $C$ differ in at least $d$ positions. Then
$|C|\leqslant2^{n-d+1}$.  The equality occurs if and only  if one of the following holds.
\begin{itemize}
\item [{\rm (i)}] $C=\mathnormal{\Omega}$.
\item [{\rm (ii)}] $C$ is  the set of all even weight vectors of $\mathnormal{\Omega}$.
\item [{\rm (iii)}] $C$ is   the set of all odd weight vectors of $\mathnormal{\Omega}$.
\item [{\rm (iv)}] $C$  consists of two vectors which are different in all positions.
\end{itemize}
\end{thm}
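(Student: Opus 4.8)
The plan is to prove the inequality $|C|\le 2^{n-d+1}$ by a puncturing argument and then to read off the equality cases by pushing the same argument further. For the bound, delete any fixed $d-1$ of the $n$ coordinates from every vector of $C$; since any two vectors of $C$ differ in at least $d$ positions, their restrictions still differ in at least one of the remaining $n-d+1$ positions, so the restriction map $C\to\{0,1\}^{n-d+1}$ is injective and $|C|\le 2^{n-d+1}$.

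For the equality analysis, assume $|C|=2^{n-d+1}$, so the restriction of $C$ to \emph{every} choice of $n-d+1$ coordinates is a bijection onto $\{0,1\}^{n-d+1}$. The cases $d=1$ (which forces $C=\mathnormal{\Omega}$, giving (i)) and $d=n$ (which forces $|C|=2$, and the two vectors must then differ in all $n$ positions, giving (iv)) are immediate. For $2\le d\le n-1$ I would first show $d=2$. Fix a $d$-subset $S$ of coordinates, a coordinate $c\in S$, and a vector $u\in C$. Since deleting the $d-1$ coordinates of $S\setminus\{c\}$ leaves a bijection, there is a unique $v\in C$ agreeing with $u$ on all coordinates outside $S$ and disagreeing with $u$ at $c$; as $v$ and $u$ already agree on the $n-d$ coordinates outside $S$, the hypothesis $\mathrm{dist}(u,v)\ge d$ forces them to disagree on all $d$ coordinates of $S$, i.e.\ $v=u+\mathbf 1_S$. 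Hence $u+\mathbf 1_S\in C$ for every $u\in C$ and every $d$-subset $S$. Choosing two $d$-subsets $S_1,S_2$ with $|S_1\cap S_2|=d-1$ (possible exactly because $1\le d\le n-1$), the vectors $u+\mathbf 1_{S_1}$ and $u+\mathbf 1_{S_2}$ of $C$ are at Hamming distance $|S_1\triangle S_2|=2$, whence $d\le 2$ and so $d=2$.

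It remains to classify the sets $C$ with $d=2$ and $|C|=2^{n-1}$, necessarily with $n\ge 2$. Such a $C$ is precisely a maximum independent set of the hypercube graph $Q_n$ (vertex set $\{0,1\}^n$, with edges joining vectors at Hamming distance $1$). Splitting $Q_n$ into its two facets $\{0,1\}^{n-1}\times\{0\}$ and $\{0,1\}^{n-1}\times\{1\}$, each a copy of $Q_{n-1}$ and joined by a perfect matching, and writing $I_i=\{x:(x,i)\in C\}$, independence across the matching gives $I_0\cap I_1=\emptyset$, while $|I_0|+|I_1|=2^{n-1}$ then forces $\{I_0,I_1\}$ to be a partition of $\{0,1\}^{n-1}$ into two independent sets of $Q_{n-1}$; since $Q_{n-1}$ is connected and bipartite, its only such partition is into even- and odd-weight vectors, and unwinding the weight parities shows that $C$ is the set of all even-weight vectors of $\{0,1\}^n$ or the set of all odd-weight vectors, i.e.\ case (ii) or (iii). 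The step I expect to be the main obstacle is the middle one: extracting the closure property $u+\mathbf 1_S\in C$ from bijectivity of the coordinate restrictions and thereby forcing $d=2$; once that is in place, the bound and the $d\in\{1,n\}$ cases are routine and the $d=2$ classification reduces to the standard fact that $Q_m$ has a unique bipartition.
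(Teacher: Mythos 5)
Your proof is correct, but there is nothing in the paper to compare it against: the authors do not prove Theorem~\ref{single} at all --- they simply recall it as the Singleton bound, together with the classification of binary MDS codes, from the Huffman--Pless book. So your argument is a self-contained substitute for an external citation. The three steps all check out: the puncturing argument gives the bound and, under equality, makes every restriction to $n-d+1$ coordinates a bijection onto $\{0,1\}^{n-d+1}$; the closure property $u+\mathbf{1}_S\in C$ for every $d$-subset $S$ follows correctly (the unique preimage $v$ of ``$u$ with bit $c$ flipped'' agrees with $u$ off $S$, so distance at least $d$ forces disagreement on all of $S$), and comparing $u+\mathbf{1}_{S_1}$ with $u+\mathbf{1}_{S_2}$ for $|S_1\cap S_2|=d-1$ legitimately forces $d\leqslant 2$ when $2\leqslant d\leqslant n-1$; and the $d=2$ case is exactly the statement that the only maximum independent sets of the hypercube $Q_n$ are the two parity classes, which your facet decomposition plus the uniqueness of the bipartition of the connected bipartite graph $Q_{n-1}$ establishes (note both $I_0$ and $I_1$ are automatically nonempty, since neither can be all of $\{0,1\}^{n-1}$). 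The only cosmetic omission is the trivial ``if'' direction, i.e.\ checking that each of (i)--(iv) actually attains $2^{n-d+1}$ for the relevant $d$; you may want to add one sentence for that.
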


We will use   ${\bmi j}$  for   the all one vector.

\begin{lem}\label{f2n}
Let $C$ be a set of $(0, 1)$-vectors of length $n\geqslant5$ such that every  two distinct   vectors in $C$  differ in at least  $2$ positions.
Let $M$ be  the matrix  whose columns are the vectors in  $C$ and suppose that  ${\bmi j}$ is contained  in the row space of $M$. Then $|C|\leqslant5\cdot2^{n-4}$.
\end{lem}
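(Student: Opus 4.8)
The plan is to reduce the statement to the Singleton bound (Theorem~\ref{single}) after an appropriate case split governed by the constraint that ${\bmi j}$ lies in the row space of $M$. Write $C\subseteq\{0,1\}^n$ with minimum Hamming distance at least $2$, so $|C|\le 2^{n-1}$ by Singleton with $d=2$; we must improve this to $5\cdot 2^{n-4}$ whenever ${\bmi j}$ is in the row space of $M$. The rough idea is that the codes meeting the $2^{n-1}$ bound are, by Theorem~\ref{single}, exactly the full space, the even-weight code, the odd-weight code, or a two-element ``repetition-like'' code; the condition on ${\bmi j}$ will be incompatible with the first three for $n\ge 5$ unless we give up a constant factor, and a short independent argument handles everything else.

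First I would set up the linear-algebraic reformulation. The rows of $M$ are indexed by $\{1,\dots,n\}$ and the columns by $C$; saying ${\bmi j}$ (the all-ones vector over $C$) is in the row space of $M$ over $\mathbb{R}$ means there is a real vector $a=(a_1,\dots,a_n)$ with $\sum_i a_i M_{i,c}=1$ for every $c\in C$, i.e. every codeword $c$ satisfies the affine equation $\langle a,c\rangle=1$. Thus $C$ lies in an affine hyperplane of $\mathbb{R}^n$ not through the origin; in particular $\mathbf{0}\notin C$. The key consequence I want to extract is a bound on how large a ``Singleton-extremal'' piece of $C$ can be: if $C$ were all of $\{0,1\}^n$, or the even-weight code, it would contain $\mathbf{0}$, contradicting $\langle a,\mathbf 0\rangle=1$; and the odd-weight code is not contained in any affine hyperplane missing the origin for $n\ge 3$ (it affinely spans all of $\mathbb{R}^n$, as one checks from the weight-one and weight-three vectors). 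So none of the Singleton-extremal configurations (i)--(iii) can occur, and (iv) gives $|C|=2\le 5\cdot 2^{n-4}$ for $n\ge 5$.

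Since the extremal cases are excluded but we need a quantitative bound, I would argue by a puncturing/induction scheme: pick a coordinate $i$ and split $C$ into $C_0$ and $C_1$ according to the value in position $i$; puncturing position $i$ keeps minimum distance $\ge 1$ trivially but one of $C_0,C_1$ punctured still has distance $\ge 2$, or else one applies the distance-$2$ bound directly. The clean route is: delete a coordinate on which $a_i\ne 0$ (which exists), express one coordinate affinely in terms of the others using $\langle a,c\rangle=1$, and conclude $C$ injects into $\{0,1\}^{n-1}$ with minimum distance still at least... here lies the subtlety. Minimum distance can drop to $1$ under a single puncturing, so instead I would delete \emph{two} coordinates and track the distance loss carefully, or — more robustly — invoke Lemma~\ref{in}: an independent set is irrelevant here, but its coding-theoretic cousin, the Plotkin bound, says a distance-$2$ binary code of length $n$ has at most roughly $2^{n-1}$ words and the deficiency from that bound is controlled by how concentrated the weights are, which the hyperplane condition forces. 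I expect the main obstacle to be exactly this: ruling out the ``near-extremal'' codes — those with $|C|$ strictly between $5\cdot 2^{n-4}$ and $2^{n-1}$ — since these are not covered by the equality clause of Singleton and require either a stability version of Singleton or a bespoke counting argument exploiting that $C$ lies on an affine hyperplane and has no pair of words at distance $1$. A workable plan for that obstacle: for each value $b\in\{0,1\}$, the set $C_b$ (codewords with $i$-th coordinate $b$), after deleting coordinate $i$, is a distance-$\ge 2$ code in $\{0,1\}^{n-1}$, so $|C_b|\le 2^{n-2}$; to beat $2\cdot 2^{n-2}=2^{n-1}$ we need one of the $C_b$ to be strictly smaller, and I would show $|C_0|\le 2^{n-3}$ (for a suitable choice of $i$) by iterating: $C_0$ punctured at $i$ still cannot be all of $\{0,1\}^{n-1}$ nor the even-weight code, because the affine relation $\langle a,c\rangle=1$ restricted to $c_i=0$ forces a nontrivial affine constraint on the remaining $n-1$ coordinates, and a distance-$\ge 2$ code properly inside $\{0,1\}^{n-1}$ that omits a coset has size $\le 2^{n-3}$ when $n-1\ge 4$. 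Combining $|C|=|C_0|+|C_1|\le 2^{n-3}+2^{n-2}=3\cdot 2^{n-3}=6\cdot 2^{n-4}$ is not quite enough, so the final squeeze must use the $d\ge 2$ condition once more inside $C_1$ to knock $6\cdot 2^{n-4}$ down to $5\cdot 2^{n-4}$; I anticipate this last factor is where one genuinely needs the full strength of the hyperplane hypothesis together with minimum distance, and it is the step I would budget the most care for.
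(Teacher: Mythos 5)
Your reformulation is sound as far as it goes: the hypothesis does say that every codeword $c$ satisfies a fixed affine equation $\langle a,c\rangle=1$ over the reals, and this does rule out the equality cases (i)--(iii) of Theorem~\ref{single}. But from that point on the proposal is a plan rather than a proof, and the plan rests on a step that is false. You assert that ``a distance-$\geqslant 2$ code properly inside $\{0,1\}^{n-1}$ that omits a coset has size $\leqslant 2^{n-3}$''; there is no such stability in the Singleton bound. The even-weight code of length $m$ with one word deleted still has minimum distance $2$ and size $2^{m-1}-1$, far above $2^{m-2}$, so excluding the extremal configurations costs only one codeword, not a factor of $2$. Even granting that step, your own accounting stops at $|C|\leqslant 6\cdot 2^{n-4}$, and the ``final squeeze'' to $5\cdot 2^{n-4}$ --- which you yourself identify as the place where the full strength of the hyperplane hypothesis must enter --- is never carried out. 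So the essential quantitative content of the lemma is missing.

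For comparison, the paper argues quite differently, with no puncturing or stability. Assuming $|C|>5\cdot 2^{n-4}$, it applies pigeonhole to the last $n-2$ coordinates: since $5\cdot 2^{n-4}>2^{n-2}$, two codewords agree there, and the distance condition then forces them to differ in both of the first two positions; substituting these two columns into $(x_1,\ldots,x_n)M=\bmi{j}$ yields $x_1=\pm x_2$. Running this over every pair of coordinates gives, after reordering, $x_1=\cdots=x_k=-x_{k+1}=\cdots=-x_n$, and complementing the last $n-k$ coordinates then turns $C$ into a constant-weight code, whence $|C|\leqslant\binom{n}{\lfloor n/2\rfloor}\leqslant 5\cdot 2^{n-4}$ for $n\geqslant 5$, a contradiction. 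The counting you would need to supply is thus of an entirely different nature from anything in your outline.
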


\begin{proof}{
Toward a contradiction, suppose  that  $|C|>5\cdot2^{n-4}$. Let
\begin{equation}\label{xj}
(x_1,  \ldots,   x_n)M=\bmi{j},
\end{equation}
for some reals $x_1, \ldots, x_n$.
Let $M'$ be the matrix constituted from  the last $n-2$  rows of $M$  and partition the columns of $M'$ such that equal
columns belong  to the same part. Since  the number of parts in the partition is at most $2^{n-2}$ and  $5\cdot2^{n-4}>2^{n-2}$,  there is a  part  of size at least $2$.  Since every  two distinct   columns in $M$  differ in at least  $2$ positions,
we find  two columns in $M$
such that their entries   are the same at all positions except for the first and the second  positions. It follows from (\ref{xj}) that either
$x_1=x_2$ or $x_1=-x_2$.   By applying  this  argument to any pair of  rows of $M$ and  a suitable ordering of the rows of $M$, we find that $x_1=\cdots=x_k=-x_{k+1}=\cdots=-x_n$, for some $k$. Now, let $N$ be the matrix obtained from $M$ by subtracting ${\bmi j}$ from  $i$th row of $M$, for all $i\in\{k+1, \ldots, n\}$, and leaving the first $k$ rows intact.  We have ${\bmi j}N=(n-k+1/x_1){\bmi j}$. This means that  the column vectors  of $N$ have the same
number of ones  which in turn implies that $|C|\leqslant{n\choose\lfloor n/2\rfloor}$. This contradicts   $|C|>5\cdot2^{n-4}\geqslant{n\choose\lfloor n/2\rfloor}$, for $n\geqslant 5$.
}\end{proof}

It is an  interesting  problem  to determine the best upper  bound for $|C|$ in Lemma \ref{f2n}.

In \cite{gmt},  the maximum order of a reduced bipartite graph of rank $r$ is determined. The graph attaining the maximum order is unique and  is described as follows.
Let $B$ be a set of size $n$  and $\mathscr{B}$ be a family of subsets of $B$. The {\sl incidence graph} $(B, \mathscr{B})$ is the bipartite  graph  with bipartition $\{B, \mathscr{B}\}$ so that  the  vertices  $x\in B$ and $X\in\mathscr{B}$ are adjacent if and only if  $x\in X$.
If  $\mathscr{P}(B)$ is the family of all nonempty subsets of $B$, then we denote the incidence graph $(B, \mathscr{P}(B))$ by $\mathcal{B}_n$.
It is routine to verify that    $\mathcal{B}_n$ is a reduced bipartite graph of   rank $2n$ and   order $b(2n)$.
Further, we denote  by $\mathcal{O}_n$ the incidence graph corresponding to  the family of all  subsets of  $B$ of odd size.

\begin{thm}\label{bi} {\rm\cite{gmt}}
The order of a  reduced bipartite graph   of  rank $r$ is at most $b(r)=2^{r/2}+r/2-1$. Moreover, every reduced bipartite graph   of  rank $r$ and order  $ b(r)$ is isomorphic to $\mathcal{B}_{r/2}$.
\end{thm}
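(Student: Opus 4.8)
The plan is to translate the statement into the language of the biadjacency matrix and reduce it to a combinatorial inequality about $(0,1)$-matrices. Order the vertices of $G$ in two blocks according to the parts $X,Y$ of the bipartition; then $A(G)=\begin{pmatrix}0&B\\B^{\mathrm T}&0\end{pmatrix}$ with $B$ the $|X|\times|Y|$ adjacency submatrix, so that $\mathrm{rank}(G)=2\,\mathrm{rank}(B)$. Hence $r$ is even, say $r=2k$ with $\mathrm{rank}(B)=k$. The graph $G$ is reduced precisely when $B$ has no zero row or column and no two equal rows or columns, and then $|V(G)|$ equals the number of rows of $B$ plus the number of columns. Since the biadjacency matrix of $\mathcal B_k$ is, up to permuting rows and columns, the $k\times(2^k-1)$ matrix $B^\ast$ whose columns run through all nonzero $(0,1)$-vectors of length $k$, the theorem is equivalent to: \emph{a $(0,1)$-matrix of rank $k$ with distinct nonzero rows and distinct nonzero columns has at most $2^k+k-1$ rows and columns together, with equality only for $B^\ast$ up to transposition and permutation of rows and columns.}

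The basic tool is that a $d$-dimensional subspace $V\subseteq\mathbb R^m$ contains at most $2^d$ vectors with entries in $\{0,1\}$: choose $d$ coordinates on which the projection is injective on $V$ (possible since a basis of $V$ spells out an $m\times d$ matrix of rank $d$), and observe that a $(0,1)$-vector in $V$ is determined by its restriction to those coordinates. Applied to the row space and column space of $B$, this already gives at most $2^k-1$ rows and at most $2^k-1$ columns; the content of the theorem is that the two counts cannot both be close to $2^k$. To capture the trade-off, pick $k$ rows of $B$ spanning the row space, forming a rank-$k$ submatrix $B_0$; by the projection remark the columns of $B$ are determined by their entries in those rows, so they give a set $C\subseteq\{0,1\}^k\setminus\{0\}$ of $q$ distinct vectors spanning $\mathbb R^k$, where $q$ is the number of columns. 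Every row of $B$ is a linear combination of the chosen $k$ rows, hence equals $c\mapsto\langle\mu,c\rangle$ for a unique $\mu\in\mathbb R^k$ with $\langle\mu,c\rangle\in\{0,1\}$ for all $c\in C$; thus the rows correspond to distinct nonzero elements of $\Lambda(C):=\{\mu\in\mathbb R^k:\langle\mu,c\rangle\in\{0,1\}\text{ for all }c\in C\}$. Therefore $(\text{rows})+(\text{columns})\le(|\Lambda(C)|-1)+|C|$, and everything reduces to the key inequality
\[
|\Lambda(C)|+|C|\le 2^k+k\qquad\text{for every spanning }C\subseteq\{0,1\}^k\setminus\{0\},
\]
together with a description of when it is tight.

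I would establish this key inequality by induction on $k$, the case $k=1$ being $C=\{1\}$, $\Lambda(C)=\{0,1\}$. Fix a coordinate, split $C$ into the part having a $0$ in that coordinate and the part having a $1$ there, delete the coordinate, and compare $\Lambda(C)$ with the analogous set $\Lambda'$ in dimension $k-1$ obtained from the first part: an element $\mu$ of $\Lambda(C)$ restricts to an element of $\Lambda'$, and its deleted coordinate is then pinned down to at most two values -- indeed to at most one value once the second part of $C$ is not too small -- so $|\Lambda(C)|$ is bounded in terms of $|\Lambda'|$, and the induction closes after bookkeeping the sizes of the two parts. Both extremes $C=\{e_1,\dots,e_k\}$ (where $\Lambda(C)=\{0,1\}^k$, so $|\Lambda(C)|=2^k$) and $C=\{0,1\}^k\setminus\{0\}$ (where $\Lambda(C)=\{0,e_1,\dots,e_k\}$, so $|\Lambda(C)|=k+1$) attain equality and both correspond to $B^\ast$; the equality analysis then amounts to showing these are the only spanning $C$ with $|\Lambda(C)|+|C|=2^k+k$ and that at equality all of $\Lambda(C)\setminus\{0\}$ must actually occur as rows, whence $B=B^\ast$ up to the allowed symmetries and $G\cong\mathcal B_k$.

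The main obstacle is this key inequality -- quantifying precisely how each extra column of $B$ forces the space of admissible rows to shrink -- together with a clean-enough equality analysis to force the extremal graph to be $\mathcal B_k$. An alternative that stays within the paper's toolkit is an induction on $r$: let $H$ be an induced subgraph of maximum order with $\mathrm{rank}(H)<\mathrm{rank}(G)$; by Lemma~\ref{lov}, and since $H$ is bipartite and hence of even rank, $\mathrm{rank}(H)=r-2$; using Lemma~\ref{lov}(iv) one checks that all duplication classes of $H$ and its at most one isolated vertex lie in one part of the bipartition while $V(G)\setminus V(H)$ lies in the other, and passing to the reduction $H'$ of $H$ feeds the induction hypothesis $|V(H')|\le b(r-2)$. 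One is then left to bound $|V(G)\setminus V(H)|$ plus the number of duplication classes of $H$ by $2^{r/2-1}+1$ using Lemma~\ref{lov}(i) and Lemma~\ref{in} -- and this last step, the assertion that the two sides of $G$ cannot both be large, is again the crux.
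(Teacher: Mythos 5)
First, a caveat about the comparison you asked for: the paper does not prove Theorem~\ref{bi} at all --- it is imported verbatim from the authors' earlier work \cite{gmt}. The closest relative inside this paper is the proof of Theorem~\ref{bigen}, which runs exactly along the lines of your ``alternative'' closing paragraph (a maximum-order induced subgraph $H$ of smaller rank, Lemma~\ref{lov}, Lemma~\ref{in}, the Singleton bound), and that is also the style of argument in \cite{gmt}. Your primary route is genuinely different, and the reduction itself is correct: passing to the biadjacency matrix, projecting the columns onto $k$ spanning rows to get a spanning set $C\subseteq\{0,1\}^k\setminus\{0\}$, and identifying the rows with distinct nonzero elements of $\Lambda(C)=\{\mu\in\mathbb{R}^k:\langle\mu,c\rangle\in\{0,1\}\text{ for all }c\in C\}$ is sound. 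Note, though, that your key inequality $|\Lambda(C)|+|C|\leqslant 2^k+k$ is not merely sufficient but \emph{equivalent} to the theorem: since $e_1,\dots,e_k\in\Lambda(C)$ for every $C$, the matrix with columns $C$ and rows all of $\Lambda(C)\setminus\{0\}$ is itself a reduced biadjacency matrix of rank $k$. So you have reformulated the theorem cleanly, but all of its content now sits in that one inequality.

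And that is where the genuine gap lies. The induction you sketch does not close as described: splitting $C$ on a coordinate into $C_0$ and $C_1$, using $|\Lambda(C)|\leqslant 2|\Lambda'(C_0')|$ and the inductive bound $|\Lambda'(C_0')|+|C_0|\leqslant 2^{k-1}+k-1$ gives only $|\Lambda(C)|+|C|\leqslant 2^k+2k-2-|C_0|+|C_1|$, which exceeds $2^k+k$ unless $|C_0|\geqslant|C_1|+k-2$; making the ``pinned down to one value'' improvement precise and uniform over all $\mu'$ (and handling the cases where $C_0'$ fails to be spanning, or contains $0$, so that the induction hypothesis does not even apply) is exactly the missing work. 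Your equality analysis is also wrong as stated: the set $C$ depends on which $k$ spanning rows you choose, and already for $k=2$ the spanning set $C=\{e_1,\,e_1+e_2\}$ attains $|\Lambda(C)|+|C|=2^k+k$ while being neither $\{e_1,\dots,e_k\}$ nor $\{0,1\}^k\setminus\{0\}$ (it arises from $\mathcal{B}_2$ under a non-standard choice of spanning rows), so the classification must be carried out up to the change of basis induced by that choice. In short: an attractive and correct reformulation, but the crux --- both the inequality and the uniqueness of the extremal graph --- remains unproven.
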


\section{Bipartite graphs}

For a bipartite graph $G$ with bipartition $\{X, Y\}$,
the submatrix of $A(G)$ whose  rows and  columns are respectively  indexed  by $X$ and $Y$ is called the   {\sl bipartite adjacency matrix} of $G$ and is denoted by $B(G)$.
To establish our main result,  we need the following theorem.  It is straightforward to see that it    generalizes   Theorem \ref{bi}.  We recall again  that the rank of a bipartite graph is always  even.

\begin{thm}\label{bigen}
Let $G$ be a reduced bipartite graph   of  rank $r\geqslant6$ and order
$n>c(r)=3\cdot2^{r/2-2}+r/2$ with bipartition $\{X, Y\}$.  Then  $\min\{|X|, |Y|\}=r/2$.
\end{thm}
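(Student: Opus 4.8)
The plan is to run an extremal/induction argument driven by Lemma~\ref{lov}. Let $r_0 = r/2$, so we must show $\min\{|X|,|Y|\} = r_0$. Let $H$ be an induced subgraph of $G$ of maximum order subject to $\mathrm{rank}(H) \leqslant \mathrm{rank}(G) - 2$; by Lemma~\ref{lov} we have $\mathrm{rank}(H) \in \{r-2, r-3\}$, but since $G$ (hence $H$) is bipartite and bipartite graphs have even rank, in fact $\mathrm{rank}(H) = r-2$. First I would pass to the reduced graph $\widehat H$ underlying $H$ (delete isolated vertices and collapse the duplication classes, each of which has exactly two elements by Lemma~\ref{lov}(iii)); then $\widehat H$ is a reduced bipartite graph of rank $r-2$. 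By the induction hypothesis applied at level $r-2$, either $\widehat H$ has order $\leqslant c(r-2)$, or $\min$ of the two sides of $\widehat H$ equals $r_0 - 1$. The base case $r = 6$ must be checked separately, using Theorem~\ref{bi} and a direct analysis of reduced bipartite graphs of rank $4$.

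Next I would bound $|V(G)\setminus V(H)|$. Each of the at most one isolated vertex of $H$ contributes, by Lemma~\ref{lov}(ii), a vertex whose neighborhood is exactly $V(G)\setminus V(H)$; each duplication class $\{v_i, v_i'\}$ contributes two vertices, and by Lemma~\ref{lov}(iv) the sets $V(G)\setminus V(H)$ split as $T_1 \cup T_2$ with $T_1 \subseteq N(v_i)\setminus N(v_i')$, $T_2 \subseteq N(v_i')\setminus N(v_i)$ for every $i$. So $|V(G)| \leqslant |V(\widehat H)| + 2s + \varepsilon + |V(G)\setminus V(H)|$ where $s$ is the number of duplication classes and $\varepsilon \in\{0,1\}$ counts the possible isolated vertex. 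Combining this with the bound on $|V(G)\setminus V(H)|$ coming from Lemma~\ref{lov}(i) and Lemma~\ref{in} (applied to a largest independent set, e.g. one of the two color classes), one gets an inequality that forces $n \leqslant c(r)$ unless $\widehat H$ is in the exceptional family, i.e. $\min\{|X(\widehat H)|, |Y(\widehat H)|\} = r_0 - 1$.

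So suppose $\widehat H$ has a side of size $r_0 - 1$; say $|X(\widehat H)| = r_0 - 1$. I want to conclude $\min\{|X|,|Y|\} = r_0$. Since $\widehat H$ has rank $r-2$ and one side of size $r_0-1$, the rows of $B(\widehat H)$ (indexed by $X(\widehat H)$) are $r_0-1$ linearly independent vectors, and the $r_0 - 1$ columns spanning the column space are distinct; the full side $Y(\widehat H)$ consists of all columns at Hamming distance $\geqslant 2$ from one another that lie in the relevant span — here I would invoke Lemma~\ref{f2n} or the Singleton bound (Theorem~\ref{single}) to pin down $|Y(\widehat H)|$, much as in the proof of Theorem~\ref{bi}. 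The key structural point is that the vertices of $G - H$ attach to the $X$-side and to the $Y$-side in a way constrained by Lemma~\ref{lov}(iv): the duplication classes of $H$ all lie on one side, and every vertex of $G-H$ hits exactly one of $v_i, v_i'$. Tracking which side each new vertex lands on, and using that $G$ is reduced so no two vertices of $G$ share a neighborhood, I would show that adding the vertices of $G-H$ increases the size of the $X$-side by exactly $1$ (from $r_0-1$ to $r_0$) while leaving it as the smaller side, forcing $|X| = r_0$.

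The main obstacle I anticipate is this last bookkeeping step: showing that when $\widehat H$ is already in the extremal family, the extra vertices of $G$ cannot be distributed so as to push the smaller side above $r_0$ or to make $G$ non-reduced — in other words, ruling out a "near-extremal $\widehat H$ plus a few vertices" configuration with $n > c(r)$ but both sides $> r_0$. This requires carefully using the disjointness of $T_1, T_2$ from Lemma~\ref{lov}(iv) together with the reduced hypothesis on $G$, and possibly a second application of Lemma~\ref{jad} to a well-chosen pair of non-adjacent vertices to get a sharper rank drop. The counting inequalities (from Lemma~\ref{in} and Lemma~\ref{f2n}) are routine once the case division is set up, and the rank parity of bipartite graphs keeps the induction clean by removing the $r-3$ case entirely.
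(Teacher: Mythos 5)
Your setup matches the paper's: take $H$ of maximum order with $\mathrm{rank}(H)\leqslant r-2$ (parity forces $\mathrm{rank}(H)=r-2$), pass to the reduced graph $K$, apply induction, and control $|V(G)\setminus V(H)|$ via Lemma~\ref{lov}(i) and Lemma~\ref{in}. But there are two genuine gaps. First, your claim that in the non-exceptional case ``one gets an inequality that forces $n\leqslant c(r)$'' does not hold: the count $n=k+s+t+\epsilon$ with $k\leqslant b(r-2)$ and $t<(n+3)/4$ leaves $s$ (the number of duplication classes) uncontrolled, and bounding $s$ requires the Singleton bound applied to the neighborhoods of the $v_i$ inside $Q=X\cap V(K)$, i.e.\ $s\leqslant 2^{q-t+1}$. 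That bound is only useful once you relate $q$ and $t$ to $|X|$ (the paper shows $V(G)\setminus V(H)\subseteq X$ and $S\cup S'\subseteq Y$, so $|X|=q+t$), and the whole argument is driven by the contradiction hypothesis $\min\{|X|,|Y|\}\geqslant r/2+1$. In the paper neither case concludes $n\leqslant c(r)$; both cases funnel into the same conclusion $t=2$, $q=r/2-1$, $\epsilon=1$.

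Second, the endgame you flag as ``the main obstacle I anticipate'' is precisely the heart of the proof, and you have not supplied it. Once $t=2$ and $q=r/2-1$, equality in the Singleton bound forces the induced subgraph on $Q\cup S$ to be $\mathcal{O}_{r/2-1}$ (the cases (ii) and (iv) of Theorem~\ref{single} being excluded because $K$ has no isolated vertex). Then one splits on whether both $T_1,T_2$ are nonempty: if so, an explicit block form of $B(G)$ shows $\mathrm{rank}(G)\geqslant r+2$; if $T_2=\varnothing$, the odd-weight structure produces a vertex $v\in S$ with $|N(v)\cap Q|=1$, so the isolated vertex $u$ of $H$ satisfies $|N(u)\triangle N(v)|=1<t$, contradicting Lemma~\ref{lov}(i). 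Without this specific structural contradiction (or an equivalent), the statement is not proved; your suggestion to ``track which side each new vertex lands on'' does not by itself rule out the configuration where $K$ is extremal and the smaller side of $G$ has size $r/2+1$.
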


\begin{proof}{ For simplicity, we set $\rho=r/2$.
We proceed by  induction on $\rho$. The assertion holds for $\rho=3$ by Theorem \ref{bi}. So assume that $\rho\geqslant4$.  It is clear that    $\mathrm{rank}(G)\leqslant2\min\{|X|, |Y|\}$ and hence    $\min\{|X|, |Y|\}\geqslant\rho$. Towards a contradiction, suppose that $\min\{|X|, |Y|\}\geqslant\rho+1$.

Let   $H$ be an induced  subgraph of $G$ with the  maximum  possible  order such that   $\mathrm{rank}(H)<\mathrm{rank}(G)$ and let $t=n-|V(H)|$. By Lemma \ref{lov} and since $H$ is bipartite,  $\mathrm{rank}(H)=r-2$.
In view of  Lemma \ref{lov}\,(iii),  suppose  that  $\{v_1, v_1'\}, \ldots, \{v_s, v_s'\}$ are     the duplication  classes of $H$, for some $s\geqslant0$, where the labeling of vertices comes from  Lemma \ref{lov}\,(iv).
For simplicity, set $S=\{v_1,\ldots,  v_s\}$ and $S'=\{v_1',\ldots,  v_s'\}$.
We denote the  number of isolated vertices of $H$  by $\epsilon$.
Lemma \ref{lov}\,(iii) implies that $\epsilon\in\{0, 1\}$.
Let  $K$ be  the  resulting  graph after deleting  the possible  isolated vertex    from   $H-S'$ and put  $k=|V(K)|$.
Clearly,    $\mathrm{rank}(K)=\mathrm{rank}(H)=r-2$ and   since $K$ is reduced,  $k\leqslant  b(r-2)$ by Theorem \ref{bi}.
Moreover, since  $\alpha(G)\geqslant n/2$,  Lemma \ref{lov}\,(i) and Lemma   \ref{in} imply   that  $t<(n+3)/4$.
It then follows from $n=k+s+t+\epsilon\geqslant c(r)+1$ and $k\leqslant  b(r-2)$ that $s>2^{\rho-4}-\rho/4+1$. This means that $s\geqslant2$.
Further, let  $T_1$ and $T_2$ be the sets  given  in Lemma \ref{lov}\,(iv).
We may assume that $V(G)\setminus V(H)\subseteq X$  and   $S\cup S'\subseteq Y$.
For this, assume with no loss of generality that  $T_1\cap X\neq\varnothing$ and let  $x\in T_1\cap X$. By Lemma \ref{lov}\,(iv), $x\in N(v_i)$, for  $i=1, \ldots, s$,  meaning that   $S\subseteq Y$.
Since any $v_i$ has some neighbor in $X\setminus T$ and $\{v_i,  v_i'\}$ is a duplication class in $H$,
we conclude that  $S'\subseteq Y$ and thus  $V(G)\setminus V(H)\subseteq X$.
Let  $P=Y\cap V(K-S)$,   $Q=X\cap V(K)$ and set   $p=|P|$,   $q=|Q|$. In  Figure \ref{bipF}, we depict   the structure of $G$ when $\epsilon=0$.
\begin{figure}[h]
\centering
\includegraphics[width=6cm]{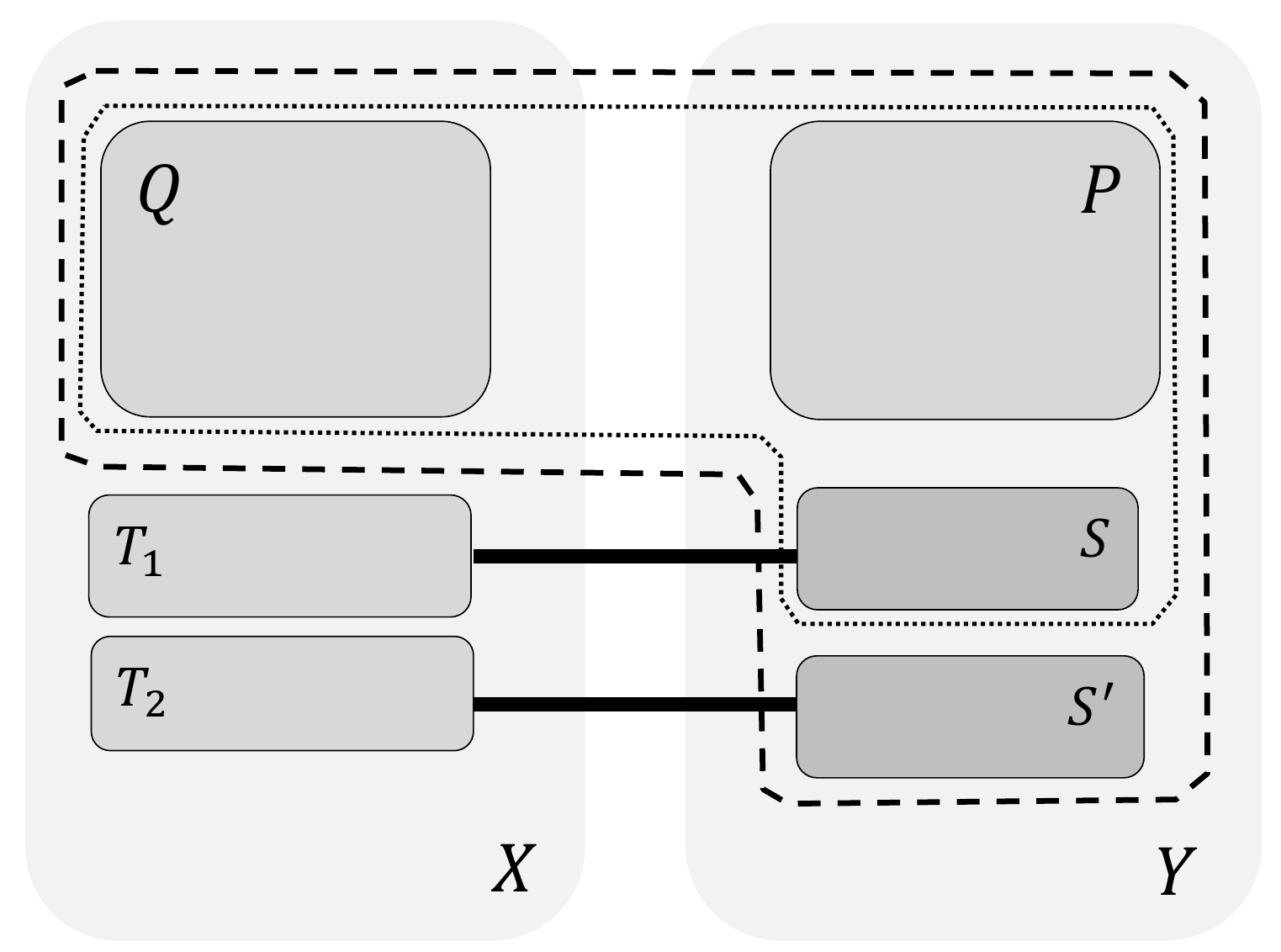}\\
\caption{The structure of $G$ concluded from Lemma \ref{lov} (The subgraphs $K\subset H$ are shown with dotted borders.)}\label{bipF}
\end{figure}

Since    $N(v_1)\triangle N(v_2)\subseteq Q$, Lemma \ref{lov}\,(i) yields   that
\begin{equation}\label{tq}
t\leqslant q.
\end{equation}
If   $t\geqslant3$,  then we may  assume with no loss of generality that $|T_1|\geqslant2$. By  Lemma \ref{lov}\,(iv),  $N(x)\triangle N(y)\subseteq P$,  for  two distinct vertices $x, y \in T_1$   and so  by   Lemma \ref{lov}\,(i), $t\leqslant p$. So, in general, we have  $t\leqslant p+2$. From  $n\geqslant c(r)+1$ and  $k\leqslant  b(r-2)$,  it follows that   $s+t=n-k-\epsilon\geqslant2^{\rho-2}+3-\epsilon$.
Since the symmetric difference of neighborhoods of any two vertices in $S$ is contained in $Q$ and has  size at least $t$
by  Lemma \ref{lov}\,(i), so  Theorem \ref{single} yields that
\begin{equation}\label{si}
s\leqslant 2^{q-t+1}
\end{equation}
and thus
\begin{eqnarray}\label{sin}
2^{\rho-2}+3-\epsilon\leqslant s+t\leqslant 2^{q-t+1}+t.
\end{eqnarray}
We claim  that  $t=2$  and $q=\rho-1$.  To establish the claim, we consider  the following two   cases.

\noindent{\bf{\textsf{Case 1.}}}  $k\leqslant(n+\rho-3)/2$.

From $n=k+s+t+\epsilon$  and $k=p+q+s$, we have  $p+q\leqslant t+\rho+\epsilon-3$.
If   $t\geqslant3$, then as we just showed,  $t\leqslant p$ and thus in view of (\ref{tq}), we have $t\leqslant q\leqslant\rho+\epsilon-3$. From  (\ref{sin}), we find that $2^{\rho-2}+2\leqslant 2^{\rho-4}+\rho-2$,  which is impossible.
Therefore     $t\leqslant2$. From  $p+q\leqslant t+\rho+\epsilon-3$
and    $q+t=|X|\geqslant\rho+1$, we obtain  that $\rho+1-t\leqslant q\leqslant\rho+t-2$ which in turn implies that  $t=2$ and either $q=\rho-1$ or $q=\rho$.
To get a contradiction,  assume that $q=\rho$.   Then $p+q\leqslant t+\rho+\epsilon-3$ yields that $\epsilon=1$  and  $p=0$. Since $P=\varnothing$,
if one of  $T_1$ or  $T_2$ is empty, then the other one will be a duplication class of $G$ by  Lemma \ref{lov}\,(iv).  Therefore   both $T_1$ and $T_2$  are nonempty,   since $G$ is reduced.
Hence we see   that
$$B(G)=\left[
\begin{array}{ccc}
B(K) & B(K) & \bmi{0} \\
\bmi{j} & {\bmi 0} &1 \\
{\bmi 0} &  \bmi{j}  & 1 \\
\end{array}
\right].$$
Since  $\mathrm{rank}(B(K))=\mathrm{rank}(K)/2=\rho-1$, one can easily check that  the rank of the  row space of $B(G)$ is $\rho+1$   which implies  that     $\mathrm{rank}(G)=r+2$,  a contradiction.
Therefore we must have      $q=\rho-1$, as claimed.

\noindent{\bf{\textsf{Case 2.}}}   $k>(n+\rho-3)/2$.

Since $n\geqslant c(r)+1$, we have  $k>c(r-2)$. By the induction hypothesis, $\min\{p+s, q\}=\rho-1$. If  $p+s=\rho-1$, then from  $p+2\geqslant t$,  we find  that  $$\rho-1=p+s=n-k+p-t-\epsilon\geqslant c(r)+1-b(r-2)-2-\epsilon\geqslant2^{\rho-2}$$ which is a contradiction  to   $\rho\geqslant4$. Hence  $q=\rho-1$. Since  $q+t=|X|\geqslant\rho+1$, we deduce that $t\geqslant2$. By (\ref{tq}),   $t\leqslant \rho-1$ and using  (\ref{sin}),  a straightforward calculation shows  that $t=2$,  as claimed.

As we proved that $t=2$  and $q=\rho-1$, it follows from  (\ref{sin}) that   $\epsilon=1$, implying that
the equality occurs  in  (\ref{si}). This means that the equality occurs in Theorem \ref{single} for $n=\rho-1$ and $d=2$.  Since  $K$ has no isolated vertex and $\rho\geqslant4$, the cases (ii) and (iv) do not occur and so
the induced subgraph on $Q\cup S$ is isomorphic to $\mathcal{O}_{\rho-1}$. If  both  $T_1$ and $T_2$ are nonempty,  then
$B(G)$ is of the form
$$\left[
\begin{array}{cccc}
B({\cal O}_{\rho-1}) & B({\cal O}_{\rho-1}) & \bmi{0} & \bmi{\star} \\
\bmi{j} & {\bmi 0} &1 & \bmi{\star} \\
{\bmi 0} &  \bmi{j}  & 1 & \bmi{\star}\\
\end{array}
\right].$$
Since  $\mathrm{rank}(B({\cal O}_{\rho-1}))=\rho-1$, we find   that  $\mathrm{rank}(G)\geqslant r+2$, a contradiction. So we may assume that $T_2$ is empty. Since the induced subgraph on $Q\cup S$ is isomorphic to $\mathcal{O}_{\rho-1}$, there exists a vertex  $v\in S$ such that $|N(v)\cap Q|=1$.
If $u$ is the isolated vertex of $H$, then  $|N(u)\triangle N(v)|=1$ which is impossible   by
Lemma \ref{lov}\,(i).
This contradiction completes the proof.
}\end{proof}

\section{Triangle-free graphs}

In this section, we establish  that every reduced non-bipartite  triangle-free graph  of rank $r$  has at most $ c(r)$ vertices.  We also prove that there exists a unique reduced non-bipartite triangle-free graph  of rank $r$ and order $c(r)$.

\begin{deff} For any integer  $r\geqslant4$, consider the graph ${\cal B}_{\lfloor r/2\rfloor-1}$ with bipartition $\{B,\mathscr{P}(B)\}$ and let $x\in B$. Let $N=N(x)$ and $M=\mathscr{P}(B)\setminus N$. For even $r$,  we duplicate $x$ and $M$ to produce $x'$ and $M'$.  Now, introduce  two new vertices $y,z$ and join $y$ to all vertices in $\{x, z\}\cup M$.
For odd $r$, duplicate $N$ and call it $N'$. Then introduce two new vertices $y,z$,   join $y$ to all vertices in  $\{z\}\cup N$ and join  $z$ to all vertices in $N'$. We denote the resulting graph by ${\cal C}_r$. Clearly, the order of  ${\cal C}_r$ is $c(r)$.  The graphs ${\cal C}_8$ and ${\cal C}_9$ are depicted  in Figure \ref{fig}.
\begin{figure}[h]
\centering
\includegraphics[width=10cm]{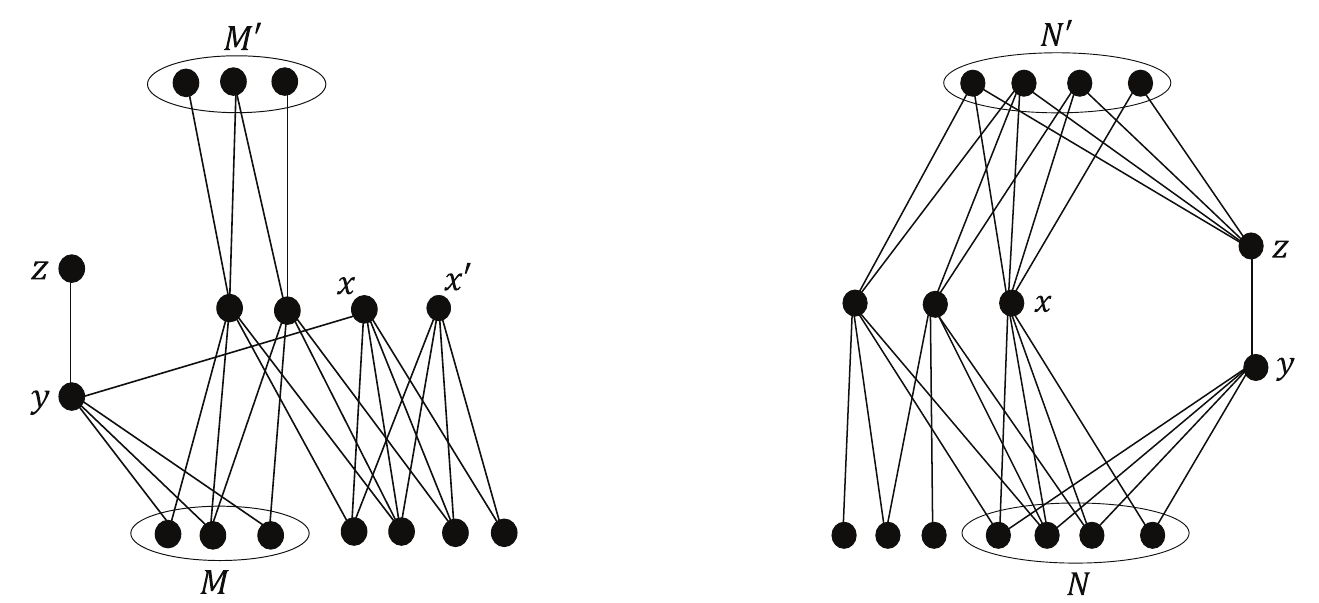}\\
\caption{The graphs ${\cal C}_8$ (left) and ${\cal C}_9$ (right)}\label{fig}
\end{figure}
\end{deff}

It is not hard to verify that one can define the graphs ${\cal C}_r$   recursively   as follows.    Let $\mathcal{C}_4$ and $\mathcal{C}_5$ be the path and the cycle on 5 vertices, respectively. For  $r=6$ (respectively, $r=7$),  let   $A$ be a set of two vertices of distance $3$ (respectively,  $2$) from each other in ${\cal C}_{4}$ (respectively, ${\cal C}_{5}$) and
for  $r\geqslant8$, let $A$ be the  maximum  independent set  of ${\cal C}_{r-2}$.
Now, duplicate  each vertex  in  $A$, introduce  two  new vertices $u, v$ and join $u$ to all vertices in $\{v\}\cup A$.

By the inductive definition of $\mathcal{C}_r$, it is easily seen that
${\cal C}_r$ is a reduced non-bipartite triangle-free graph and $\mathrm{rank}(\mathcal{C}_r)=\mathrm{rank}(\mathcal{C}_{r-2})+2$.
It follows that    $\mathrm{rank}(\mathcal{C}_r)=r$,  for $r\geqslant4$.
Furthermore,  we easily find from the definition of $\mathcal{C}_r$ that
\begin{equation}\label{alphaC}
\alpha({\cal C}_r)=3\cdot2^{\lfloor r/2\rfloor-2}-1,
\end{equation}
for  $r\geqslant6$,
and  ${\cal C}_r$ has a unique    independent set of size $\alpha({\cal C}_r)$.

\begin{remark}
Note  that  in Theorem \ref{bigen}, the hypothesis that  $n>c(r)$  cannot be weakened. For any odd  $r\geqslant7$, if one removes the edge $\{y, z\}$  of  $\mathcal{C}_{r}$, then the resulting graph, say  $H$,  is a reduced bipartite graph.
Consider the graph $H-\{z\}$. Removing $y$ from that results in the graph ${\cal B}_{(r-3)/2}$ with the neighborhood of $x$ duplicated.
So, ${\rm rank}(H-\{y,z\})=r-3$ and clearly ${\rm rank}(H-\{z\})\leqslant r-1$. Since $H-\{z\}$ is reduced,  we must have from Lemma \ref{jad}\,(ii) that  ${\rm rank}(H-\{y, z\})\leqslant{\rm rank}(H-\{z\})-2$ and so ${\rm rank}(H-\{z\})=r-1$. The sum of the row vectors  corresponding to $z$ and $y$ in $A(H)$ is equal to that of $x$, so ${\rm rank}(H)={\rm rank}(H-\{z\})$. Therefore,  $H$ is a reduced bipartite graph
with bipartition $\{X, Y\}$ of rank $r-1$ and order $c(r-1)$ where $\min\{|X|, |Y|\}=(r+1)/2$.
\end{remark}

\begin{thm}\label{main}
The order of a  reduced non-bipartite triangle-free graph   of  rank $r$ is at most $c(r)=3\cdot2^{\lfloor r/2\rfloor-2}+\lfloor r/2\rfloor$. Moreover, every reduced non-bipartite triangle-free graph   of  rank $r$ and order  $c(r)$ is isomorphic to $\mathcal{C}_r$.
\end{thm}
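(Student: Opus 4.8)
The plan is to prove both statements simultaneously by induction on $\rho = \lfloor r/2 \rfloor$, using the reduction machinery of Lemma \ref{lov} together with the just-established Theorem \ref{bigen} to force the structure down onto a smaller-rank instance. First I would dispose of the small base cases $r \in \{4,5\}$ (and possibly $6,7$) directly: a reduced non-bipartite triangle-free graph of rank $4$ or $5$ that attains $c(r)$ must be the path $P_5$ or the $5$-cycle $C_5$, and these equal $\mathcal{C}_4$, $\mathcal{C}_5$ respectively. For the inductive step, let $G$ be a reduced non-bipartite triangle-free graph of rank $r$ with $|V(G)| = n \geqslant c(r)$, and take $H$ an induced subgraph of maximum order with $\mathrm{rank}(H) < \mathrm{rank}(G)$. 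Since $G$ is triangle-free, $\alpha(G) \geqslant n/2$, so by Lemma \ref{lov}\,(i) and Lemma \ref{in} the defect $t = n - |V(H)|$ is small, roughly $t < (n+3)/4$. Writing $n = k + s + t + \epsilon$ with $k = |V(K)|$ (where $K$ is $H$ minus its duplication partners $S'$ and its possible isolated vertex), $s$ the number of duplication classes, and $\epsilon \in \{0,1\}$, and bounding $k$ either by $b(r-2)$ if $H$ is bipartite or by recursion/$c(r-2)$ if not, I would squeeze out that $s$ is large (forcing $s \geqslant 2$) and eventually that $t$ and the relevant column-counts are pinned to their extremal values.

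The heart of the argument is a case split on whether the large induced subgraph $K$ (of rank $r-2$) is bipartite or non-bipartite. If $K$ is non-bipartite, the inductive hypothesis applies to $K$ and tells us $K \cong \mathcal{C}_{r-2}$ together with its unique maximum independent set; combined with Lemma \ref{lov}\,(iv), which says the new vertices $V(G) \setminus V(H)$ split into $T_1, T_2$ attaching to the duplication classes in a prescribed bipartite-incidence pattern, one recognizes exactly the recursive construction of $\mathcal{C}_r$: duplicate the vertices of $A = \alpha$-set of $\mathcal{C}_{r-2}$, add $u,v$ with $u$ joined to $\{v\} \cup A$. One must check there is no slack — that $s$ equals $\alpha(\mathcal{C}_{r-2})$ exactly, that $t$ is exactly $2$ (so $\{u,v\}$ and nothing more), that $\epsilon = 1$, and that $T_2 = \varnothing$ — all of which drop out of the counting inequalities once $k = c(r-2)$ is forced. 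If instead $K$ is bipartite, then $H$ is bipartite, and the non-bipartiteness of $G$ must come entirely from edges among the $\leqslant t+1$ new-plus-isolated vertices or from an odd closed walk created by the attachment; here Theorem \ref{bigen} is the right tool — it says a large reduced bipartite graph of rank $r-2$ has a side of size exactly $(r-2)/2$, which rigidifies $K$'s bipartition and, via the incidence pattern of Lemma \ref{lov}\,(iv) plus triangle-freeness, either yields a contradiction (rank jumps by too much, as in the matrix computations in the proof of Theorem \ref{bigen}) or again produces $\mathcal{C}_r$ in its even-$r$ guise.

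For the upper bound $n \leqslant c(r)$ itself (as opposed to the characterization), the same bookkeeping suffices: from $n = k+s+t+\epsilon$, $k \leqslant b(r-2)$ (bipartite case) or $k \leqslant c(r-2)$ (non-bipartite case, by induction), $t \leqslant p+2$, $s \leqslant 2^{q-t+1}$ via the Singleton bound (Theorem \ref{single}) applied to the neighborhoods of $S$ inside $Q$, and $q + t$ bounded by the size of the relevant side, a finite linear-exponential optimization shows the maximum of $k+s+t+\epsilon$ is $c(r)$, attained only in the configuration described above. Lemma \ref{f2n} is the tool that handles the sub-case where the extremal bipartite piece would force ${\bmi j}$ into a row space with too many columns, ruling out configurations that would otherwise beat $c(r)$.

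I expect the main obstacle to be the bipartite case of the induction — precisely, showing that when $H$ (hence $K$) is bipartite, $G$ cannot exceed $c(r)$ and the only extremal $G$ is $\mathcal{C}_r$ for even $r$. The difficulty is that $G$ being non-bipartite while its maximum rank-deficient induced subgraph is bipartite is a delicate situation: one has very little direct leverage on where the odd cycle sits, so one must extract it from the attachment data $(T_1, T_2)$ and the isolated vertex, then run a rank computation on an explicit block matrix (of the style displayed in the proofs of Lemma \ref{lov} and Theorem \ref{bigen}) to show that unless the configuration is exactly $\mathcal{C}_r$, the rank is forced up to $r+2$ or beyond. Keeping the many small parameters ($s, t, p, q, k, \epsilon$) simultaneously consistent — especially verifying $t = 2$, $T_2 = \varnothing$, and that the induced subgraph on the duplicated side is forced to be $\mathcal{O}_{\rho-1}$ or the full $\mathcal{B}_{\rho-1}$ as appropriate — will require care, but each individual deduction is a short counting or rank argument of the type already demonstrated.
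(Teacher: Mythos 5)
Your outline follows the same architecture as the paper's proof (induction, the Lemma \ref{lov} decomposition into $K$, $S\cup S'$, $T_1\cup T_2$ and a possible isolated vertex, the Singleton bound to control $s$, Theorem \ref{bigen} and Lemma \ref{f2n} at the points you indicate), but several of your concrete steps would fail as stated. First, the base of the induction: the paper's inductive step only runs for $r\geqslant10$ --- several of its inequalities (e.g.\ $3\cdot2^{\rho-3}\leqslant2^{\rho-t}+t$ being impossible) genuinely need $\rho\geqslant5$, and the $\rho=5$ sub-case even invokes the explicit structure of $\mathcal{C}_8$ and $\mathcal{C}_9$. So you must verify all cases $r\leqslant9$, which the authors did by computer; hand-checking $r\leqslant5$ or $7$ is not enough, and $r=8,9$ are almost certainly out of reach by hand. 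Second, $\alpha(G)\geqslant n/2$ is false for non-bipartite triangle-free graphs ($C_5$, the Petersen graph); the correct argument is that either some vertex has degree less than $(n+3)/3$ (and Lemma \ref{lov}\,(i) bounds $t$ directly) or every neighborhood is an independent set of size at least $(n+3)/3$, which via Lemma \ref{in} gives only $t<(n+3)/3$, not $(n+3)/4$. Third, you take $H$ of maximum order with $\mathrm{rank}(H)<\mathrm{rank}(G)$ and then assert $\mathrm{rank}(K)=r-2$. With that choice $H$ may have rank $r-1$, in which case Lemma \ref{lov} forces $H$ to be reduced, so $s=\epsilon=0$ and the entire $S,S'$ machinery disappears. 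The paper instead uses the ``furthermore'' clause of Lemma \ref{lov}: $H$ of maximum order with $\mathrm{rank}(H)\leqslant r-2$, which guarantees $\mathrm{rank}(K)\in\{r-2,r-3\}$; as a consequence the inductive identification must allow $K\cong\mathcal{C}_{r-3}$ as well as $\mathcal{C}_{r-2}$, a possibility your sketch omits.

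The other substantive issue is your case split. Splitting on whether $K$ is bipartite leaves a regime your tools do not cover: if $K$ is bipartite but of order at most $c(r-2)$, Theorem \ref{bigen} says nothing about its bipartition. The paper splits instead on $k\leqslant c(r-2)$ versus $k>c(r-2)$ (only in the latter does the induction hypothesis force $K$ bipartite and Theorem \ref{bigen} apply), and inside the first case it must prove a separate structural claim --- that if $K$ is bipartite then all of $S$ lies on one side of its bipartition, whence $G$ itself would be bipartite --- before it can invoke the non-bipartite induction hypothesis on $K$. That embedded claim, together with the elimination of $t_1\geqslant2$ and $t_2=1$ via the block matrix and Lemma \ref{f2n}, is where most of the real work sits, and none of it is replaced by anything in your sketch. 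So the plan is directionally right but has genuine gaps at the base cases, the independence-number bound, the definition of $H$, and the coverage of the small-bipartite-$K$ regime.
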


\begin{proof}{
Let  $G$ be a  reduced non-bipartite triangle-free graph of  rank $r$  and order $n\geqslant c(r)$.
By induction on $r$, we  prove that  $G$ is isomorphic to $\mathcal{C}_r$.
In  \cite{akb, ell},  an algorithm is given to construct   all reduced graphs of a given rank. We employed the  algorithm  and
verified  that  the assertion holds  for $r\leqslant9$.  The source code of our program can be found at
{\tt http://math.ipm.ac.ir/\textasciitilde tayfeh-r/Trianglefree.htm}.
Hence let $r\geqslant10$. For simplicity, we set $\rho=\lfloor r/2\rfloor$.
Let $T$ be a subset of $V(G)$ with the minimum possible size  such that $\mathrm{rank}(G-T)\leqslant\mathrm{rank}(G)-2$. Put $H=G-T$ and  $t=|T|$.
We  show that $t<(n+3)/3$. If the minimum degree of $G$ is less than  $(n+3)/3$, then we are done by
Lemma \ref{lov}\,(i). Otherwise,  since $G$ is  triangle-free,  $\alpha(G)\geqslant(n+3)/3$ and   by  Lemma \ref{lov}\,(i) and Lemma \ref{in}, we have
$$t\leqslant \frac{\frac{n+3}{3}\left(n-\frac{n+3}{3}\right)}{2\left(\frac{n+3}{3}-1\right)}<\frac{n+3}{3},$$ as required.
In view of  Lemma \ref{lov}\,(iii),  suppose  that  $\{v_1, v_1'\}, \ldots, \{v_s, v_s'\}$ are     the duplication  classes of $H$, for some  $s\geqslant0$, where the labeling of vertices comes from  Lemma \ref{lov}\,(iv).
For simplicity, put  $S=\{v_1,\ldots,  v_s\}$ and $S'=\{v_1',\ldots,  v_s'\}$.
Since $G$ is triangle-free, by Lemma \ref{lov}\,(iv),    $S\cup S'$ is an independent set.
Denote the  number of isolated vertices of $H$  by $\epsilon$. By   Lemma \ref{lov}\,(iii),   $\epsilon\in\{0, 1\}$.
Let $K$ be  the  resulting  graph after deleting  the possible  isolated vertices    from   $H-S'$ and set  $k=|V(K)|$. By   Lemma \ref{lov}, we have $\mathrm{rank}(K)\geqslant r-3$.
Set  $P=V(K)\setminus S$ and  $p=|P|$.  Further, let  $T_1$ and $T_2$ be the sets  given  in Lemma \ref{lov}\,(iv)  with sizes $t_1$ and $t_2$, respectively. With no loss of generality, we   assume that
$t_1\geqslant t_2$.
We consider the following two cases.

\noindent{\bf{\textsf{Case 1.}}}  $k\leqslant c(r-2)$.

Let $P_1$ be the set of vertices in  $P$ which have a neighbor in  $S$. Set  $p_1=|P_1|$ and  $p_2=|P\setminus P_1|$. For the structure of $G$ when $\epsilon=0$, see Figure \ref{FigCase1}.
\begin{figure}[h]
\centering
\includegraphics[width=6cm]{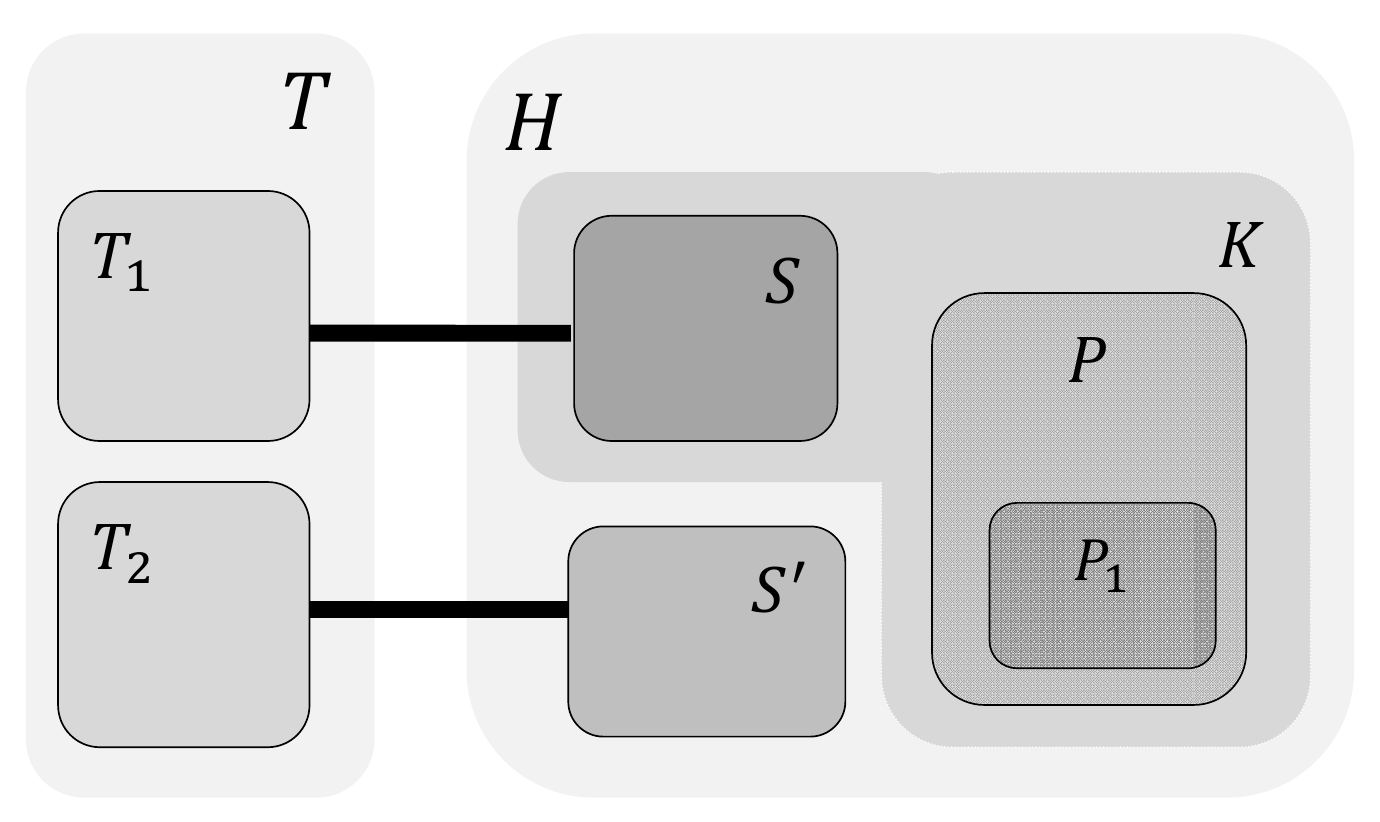}\\
\caption{The structure of $G$ in Case 1}\label{FigCase1}
\end{figure}
Since $G$ is triangle-free, there is no edge between $P_1$ and $T_1$.
We have
\begin{eqnarray}
\label{st}  s+t=n-k-\epsilon \geqslant 3\cdot2^{\rho-3}+1-\epsilon.
\end{eqnarray}
From  $k=p+s\leqslant c(r-2)$ and (\ref{st}), we see that
$$3\cdot2^{\rho-3}+1-\epsilon-t\leqslant s\leqslant 3\cdot2^{\rho-3}+\rho-1-p$$
which implies that
\begin{equation}\label{p}
p\leqslant t+\rho+\epsilon-2.
\end{equation}
From $t<(n+3)/3$, $n\geqslant c(r)$ and $k\leqslant c(r-2)$, we find that  $s=n-k-t-\epsilon>2^{\rho-3}-\rho/3-1$ and so $s\geqslant2$. Now,  since  $N(v_1)\triangle N(v_2)\subseteq P_1$,
\begin{equation}\label{t<p}
t\leqslant p_1.
\end{equation}
By (\ref{st}), Lemma \ref{lov}\,(i) and  Theorem \ref{single}, we have
\begin{equation}\label{<st<}
3\cdot2^{\rho-3}+1-\epsilon\leqslant s+t\leqslant2^{p_1-t+1}+t.
\end{equation}

Towards   a contradiction,
suppose that $t_1\geqslant2$. Then   $t\leqslant|N(u)\triangle N(v)|\leqslant t_2+p_2$,  for each  pair  $u, v\in T_1$, and thus  $t_1\leqslant p_2$.
If $\epsilon=0$, then by (\ref{p}) and (\ref{t<p}), $t/2\leqslant t_1\leqslant p_2\leqslant\rho-2$ and hence  $t\leqslant2\rho-4$. Moreover, it follows from  (\ref{p}) and $2\leqslant t_1\leqslant p_2$ that   $p_1\leqslant t+\rho-4$. From   (\ref{<st<}), we conclude  that
$3\cdot2^{\rho-3}+1\leqslant2^{\rho-3}+2\rho-4$,   a contradiction.
Therefore  $\epsilon=1$. By Lemma \ref{lov}\,(ii), $N(u)\triangle N(v)\subseteq P\setminus P_1$, for any vertices $u, v\in T_1$, and hence $t\leqslant p_2$.  Also, it follows from  (\ref{p}) and $t\leqslant p_2$ that   $p_1\leqslant\rho-1$. Combining this with (\ref{t<p}) gives $t\leqslant\rho-1$, while combining with (\ref{<st<}) gives  $3\cdot2^{\rho-3}\leqslant2^{\rho-t}+t$ which is a contradiction to $\rho\geqslant5$. Thus  $t_1=1$ and so  $t_2\leqslant1$. Now we have
\begin{equation}\label{g}
p_1\geqslant\rho-1,
\end{equation}
since if $p_1\leqslant\rho-2$, then by (\ref{<st<}), $3\cdot2^{\rho-3}\leqslant2^{p_1}+2\leqslant2^{\rho-2}+2$
which is impossible for $\rho\geqslant5$.
We proceed to show that $t_2=0$. For this, we first  establish the following  property of $K$.

We show that if $K$ is a bipartite graph with bipartition $\{K_1, K_2\}$, then $S$ is contained in one  of $K_1$ or $K_2$.  With no loss of generality, assume that   $\ell=|P_1\cap K_1|\leqslant p_1/2$.  If $\ell=0$, then $P_1\subseteq K_2$, so that every vertex in $S$, begin adjacent to a vertex in $P_1$, must be in $K_1$.   Suppose $\ell\geqslant1$.  In order to get a contradiction, we first claim  that $\ell=1$. By Theorem \ref{single}, we obtain that
\begin{equation}\label{2sin}
|K_1\cap S|\leqslant 2^{p_1-\ell-t+1} \quad\text{and}\quad |K_2\cap S|\leqslant2^{\ell-t+1}.
\end{equation}
By (\ref{p}), $p_1\leqslant t-p_2+\rho-1$ and so $\ell\leqslant(\rho+1)/2$. Using (\ref{st}), (\ref{p}) and (\ref{2sin}),  we find that
\begin{equation}\label{ezaf}
3\cdot2^{\rho-3}-t\leqslant s\leqslant 2^{p_1-\ell-t+1}+2^{\ell-t+1}\leqslant 2^{\rho-p_2-\ell}+2^\ell.
\end{equation}
If $p_2+\ell\geqslant3$ and $\rho\geqslant6$, then by (\ref{ezaf}),  $$3\cdot2^{\rho-3}-2\leqslant2^{\rho-p_2-\ell}+2^\ell\leqslant2^{\rho-3}+2^{(\rho+1)/2}<2^{\rho-3}+2^{\rho-2}-2,$$ a contradiction.  If  $p_2+\ell=2$ and $\rho\geqslant6$, then $3\cdot2^{\rho-3}-2\leqslant2^{\rho-p_2-\ell}+2^\ell\leqslant2^{\rho-2}+4$ which is again impossible.  This implies that  if $\rho\geqslant6$, then  $\ell=1$. Now,  assume that $\rho=5$. By (\ref{ezaf}), we have
\begin{equation}\label{rho5}
12-t\leqslant s\leqslant 2^{p_1-\ell-t+1}+2^{\ell-t+1}.
\end{equation}
Meanwhile, (\ref{p}) gives
\begin{equation}\label{p1p2}
p_1+p_2\leqslant t+4.
\end{equation}
If $\ell\geqslant3$, then by  $\ell\leqslant p_1/2$, $t\leqslant2$ and (\ref{p1p2}),
we have $p_1=6$, $p_2=0$ and $t=2$ which violate (\ref{rho5}).
Hence $\ell\leqslant2$.
If $\ell=2$ and $t=1$,  then  by (\ref{rho5}) and (\ref{p1p2}), we see $p_1=5$, $p_2=0$ and
the equality occurs in one of the    inequalities  of (\ref{2sin}). By Theorem \ref{single}\,(i), $K$ has an isolated vertex, a contradiction.
Further, if $\ell=t=2$, then  by (\ref{rho5}) and (\ref{p1p2}), we have $p_1=6$, $p_2=0$ and
the equality occurs in both   of the inequalities  of (\ref{2sin}). Since $K$ is reduced, from Theorem \ref{single}\,(iii), one can  deduce that the resulting graph after deleting all edges whose   endpoints are in $P_1$
is isomorphic to the disjoint union of $\mathcal{O}_2$ and $\mathcal{O}_4$. Since $\mathcal{O}_2$ is disjoint union of two edges and $\mathrm{rank}(\mathcal{O}_4)=8$,  it is easily seen that $\mathrm{rank}(K)\geqslant12$ which contradicts $\mathrm{rank}(K)\leqslant r-2\leqslant9$.
So we conclude  that $\ell=1$ and this completes the proof of the claim.
Note that for any vertex  $u\in K_2\cap S$, we have  $N(u)\cap V(K)\subseteq P_1\cap K_1$. Since $K$ is reduced and $\ell=|P_1\cap K_1|=1$,
it follows that $K_2\cap S$ has one element, say $y$. Letting   $\{x\}=P_1\cap K_1$,   every  duplication class of   $K-\{x, y\}$ is  contained in $K_2$,
since $K$ is reduced and $N(y)\cap V(K)=\{x\}$.
Also,  every  duplication class of  $K-\{x, y\}$ has at most two elements,  since otherwise,  if $u_1, u_2, u_3$ belong to a duplication class, then at least two of them  would  be  duplicates  in $K$, a contradiction.
If  $K'$  is the reduced graph corresponding to  $K-\{x, y\}$, then by Lemma \ref{jad}\,(i) we obtain that ${\rm rank}(K')\leqslant{\rm rank}(K)-2\leqslant r-4$. Since ${\rm rank}(K')$ is even,  ${\rm rank}(K')\leqslant2\rho-4$.
By (\ref{g}), we have $|P_1\cap V(K')|\geqslant(\rho-2)/2$ and  therefore,  using (\ref{st}) and
Theorem \ref{bi}, we obtain that $(\rho-2)/2+3\cdot2^{\rho-3}-3\leqslant|V(K')|\leqslant b(2\rho-4)$  which  is  a contradiction to $\rho\geqslant5$. This establishes the desired property of $K$.

Working towards   a contradiction,
suppose that  $t_2=1$.  By (\ref{<st<}),
\begin{equation}\label{rhoep}
    3\cdot2^{\rho-3}-1-\epsilon\leqslant s\leqslant 2^{p_1-1}
\end{equation}
which yields that $\rho\leqslant p_1$. Meanwhile, by (\ref{p}), we have $p_1\leqslant\rho+\epsilon-p_2$. It follows that either $p_1=\rho$ or  $p_1=\rho+1$.
First, assume that   $p_1=\rho$.
The matrix $A(G)$ contains
\begin{equation}\label{matrix}
\bbordermatrix{~ & S & S'&P_1&P_2&T_1&T_2 \cr
S&  {\bf0} & {\bf0} & B^\top  &{\bf0}  & \bmi{j}^\top & {\bf0} \cr
S'&{\bf0} & {\bf0} & B^\top  & {\bf0} & {\bf0} & \bmi{j}^\top \cr
P_1&B & B & \bmi{\star} & \bmi{\star} & \bmi{\star} & \bmi{\star} \cr
P_2&{\bf0} & {\bf 0} & \bmi{\star} & \bmi{\star}  & \bmi{\star}  & \bmi{\star} \cr
T_1&\bmi{j} & {\bf 0} & \bmi{\star} &\bmi{\star} & \star & \star \cr
T_2&{\bf0} & \bmi{j} & \bmi{\star} & \bmi{\star} & \star & \star \cr}
\end{equation}
as a principal submatrix.
Since $\mathrm{rank}(K)\geqslant r-3$, the upper-left $4\times4$ block submatrix of (\ref{matrix}) has rank at least $r-3$.
If $\bmi{j}$ is not contained in the row space of $B$, then the rank of (\ref{matrix}) would be at least $r+1$, a contradiction.
Now, applying  Lemma \ref{f2n} to the  column vectors of $B$, we find that $s\leqslant5\cdot2^{\rho-4}$.  If $\rho\geqslant6$,  this is less that $3\cdot2^{\rho-3}-3$, contradicting (\ref{rhoep}).
If $\rho=5$, then $r\geqslant10$, $s=10$, $\epsilon=1$. Hence, $2s+p_1+p_2+\epsilon+t=n\geqslant c(r)\geqslant c(10)=29$ and $p_2\leqslant\epsilon$. This gives $p_2=1$ and $k=s+p_1+p_2=16=c(8)=c(9)$.
Thus $K$ is isomorphic to either ${\cal C}_8$ or ${\cal C}_9$.
However, $K$ contains the  independent set $S$ of size $10$ in which  $|N(u)\triangle N(v)|\geqslant2$ for every distinct $u,v\in S$ while neither ${\cal C}_8$ nor ${\cal C}_9$
has such an independent set.
Therefore $p_1=\rho+1$, $p_2=0$ and $\epsilon=1$.
Note that from  (\ref{st}) and  $k=s+p_1\leqslant c(r-2)$, we have  $s=3\cdot2^{\rho-3}-2$ and thus    $k=c(r-2)$.
By the preceding paragraph,  $K$ is not bipartite, since otherwise $G$ would be bipartite.
Applying the induction hypothesis,  $K$ is isomorphic to either  $\mathcal{C}_{r-2}$ if $\mathrm{rank}(K)=r-2$ or $\mathcal{C}_{r-3}$ if $r$ is odd and $\mathrm{rank}(K)=r-3$.
Hence, in view of (\ref{alphaC}), $S$ is  a maximal  independent  set of size $\alpha(K)-1$ in $K$. To arrive at a contradiction,
we show that   ${\cal C}_m$ has no  maximal  independent  set of size $\alpha({\cal C}_m)-1$, for every integer $m\geqslant8$.  This can be  directly checked when $m=8$ or $m=9$. For $m\geqslant10$, we  see that  the degree of  any vertex  of  $\mathcal{C}_m$  not contained in the unique maximum independent set is  at least  $2^{\lfloor m/2\rfloor-2}$. Thus every  independent set not contained in the unique maximum independent set is of size   at most
$c(m)-2^{\lfloor m/2\rfloor-2}<\alpha({\cal C}_m)-1$. Therefore every independent set of size $\alpha({\cal C}_m)-1$ in  ${\cal C}_m$ is contained in the unique  maximum independent set which  means that    ${\cal C}_m$ has no maximal  independent  set of size $\alpha({\cal C}_m)-1$, as desired.

Therefore  $t_2=0$. Again $K$ is not bipartite, since otherwise $G$ would be bipartite.   It follows from (\ref{g}) and  $k=s+p_1+p_2\leqslant c(r-2)$ that
$s\leqslant 3\cdot2^{\rho-3}-p_2$.
If  $s=3\cdot2^{\rho-3}$, then  $p_2=0$, requiring  that    $k=c(r-2)$.
By the induction hypothesis,  $K$ is isomorphic to either  $\mathcal{C}_{r-2}$ or $\mathcal{C}_{r-3}$ and so    $\alpha(K)=3\cdot2^{\rho-3}-1$ which contradicts  $s=3\cdot2^{\rho-3}$.
Hence   (\ref{st}) yields  that  $s=3\cdot2^{\rho-3}-1$ and   $\epsilon=1$. Then   from  $n\geqslant c(r)$, we have $p\geqslant\rho$ which in turn by  (\ref{p}) gives $p=\rho$ and so $k=s+p=c(r-2)$.
By the induction hypothesis,  $K$ is isomorphic to either  $\mathcal{C}_{r-2}$ or $\mathcal{C}_{r-3}$ and so    $\alpha(K)=3\cdot2^{\rho-3}-1$.
This implies that $p_2=0$ and so $p_1=\rho$. Now, the
inductive definition of $\mathcal{C}_r$ shows  that $G$ is isomorphic to $\mathcal{C}_r$.

\noindent{\bf{\textsf{Case 2.}}} $k>c(r-2)$.

By the  induction hypothesis,   $K$ is a bipartite graph  with  bipartition, say
$\{P_1\cup S_1, P_2\cup S_2\}$,  where   $P=P_1\cup P_2$ and  $S=S_1\cup S_2$.
Set $p_i=|P_i|$  and $s_i=|S_i|$,  for $i=1, 2$.
With no loss of generality, we may assume that $s_1+p_1\leqslant s_2+p_2$. Since $\mathrm{rank}(K)=2\rho-2$,  Theorem \ref{bigen} implies that
\begin{equation}\label{s1p1}
s_1+p_1=\rho-1.
\end{equation}
Let $S_i'=\{v_j'\, |\,v_j\in S_i, 1\leqslant j\leqslant s\}$  for $i=1, 2$. For the structure of $G$ when $\epsilon=0$, see Figure \ref{FigCase2}.
\begin{figure}[h]
\centering
\includegraphics[width=6cm]{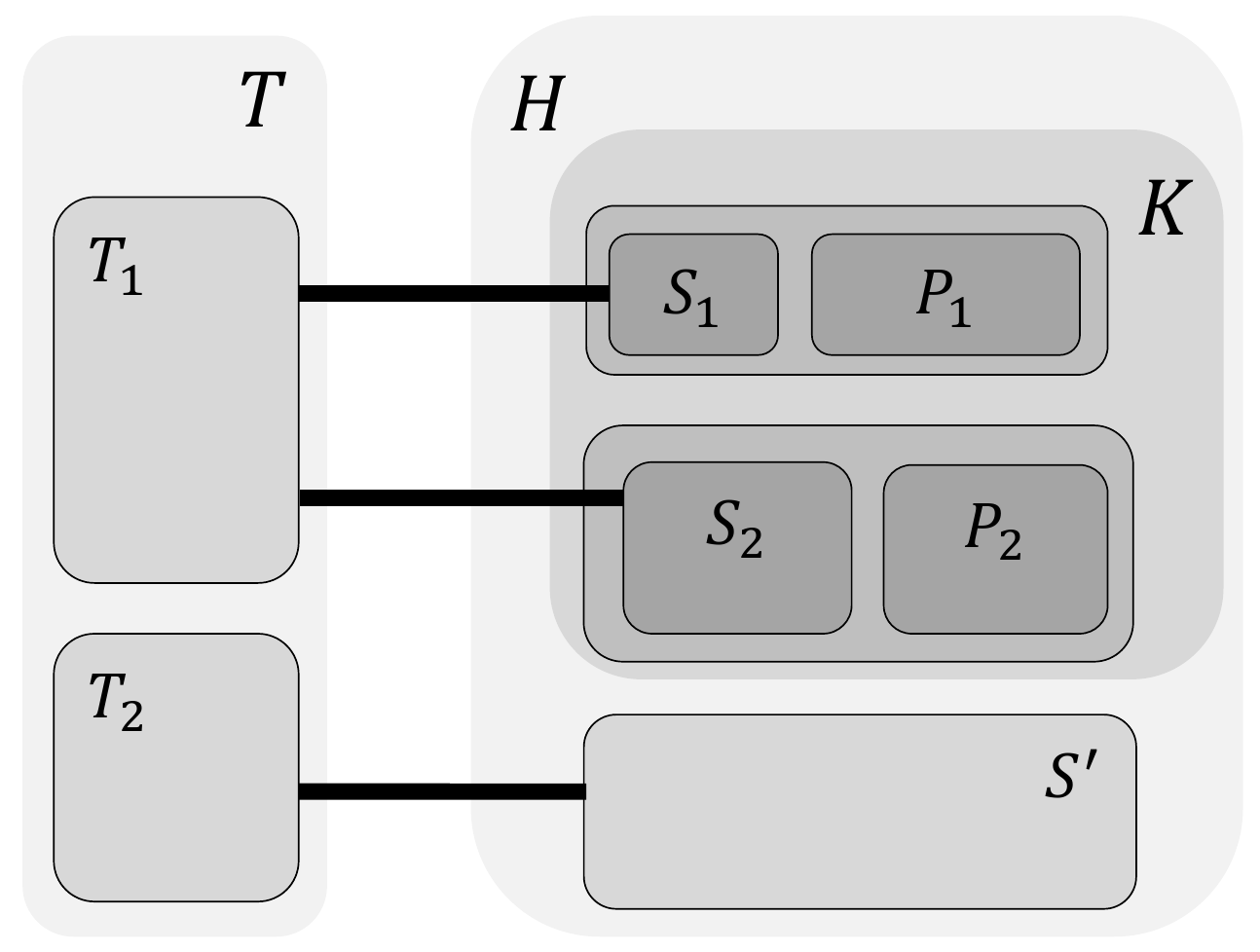}\\
\caption{The structure of $G$ in Case 2}\label{FigCase2}
\end{figure}

Working towards   a contradiction,
suppose that    $s_2\leqslant1$. We claim that $t\leqslant2\rho-2$.
Assume that  $s_1\geqslant1$.
Since $K$ is reduced,  there exists a vertex $u\in P_2$ with a neighbor in $S_1$. Since   $G$ is triangle-free, $N(u)\subseteq S_1\cup S_1'\cup P_1$ and so by Lemma \ref{lov}\,(i), we deduce that $t\leqslant2s_1+p_1\leqslant2\rho-2$, as desired.
Assume that $s_1=0$ and   $s_2=1$. It is easily  seen that the minimum degree among all vertices in  $S_2\cup S_2'$ does not exceed  $t_2+p_1$. By
Lemma \ref{lov}\,(i), we find that $t\leqslant t/2+\rho-1$ and so $t\leqslant2\rho-2$, as required.
Now, assume that $s_1=s_2=0$. From $t<(n+3)/3$, we find  that
\begin{align*}
\alpha(G)&\geqslant p_2+\epsilon\\&=n-t-p_1\\&> n-\left(\tfrac{n}{3}+1\right)-(\rho-1)\\&\geqslant 2^{\rho-1}-\tfrac{\rho}{3}.
\end{align*}
Therefore, $\alpha(G)\geqslant15$. From   $n-\alpha(G)\leqslant t+p_1=t+\rho-1$, Lemma \ref{lov}\,(i) and Lemma \ref{in}, we deduce that $t\leqslant\tfrac{15}{13}(\rho-1)$.
This establishes  the claim.
Now, by Theorem \ref{bi},   $$c(r)\leqslant n=k+t+s_1+s_2+\epsilon\leqslant b(2\rho-2)+3(\rho-1)+2$$ which  implies that  $\rho=5$ and so $s_2=n-k-s_1-t-\epsilon\geqslant10-s_1-t-\epsilon$. Hence
\begin{equation}\label{s1t1}
s_1+t+\epsilon\geqslant9.
\end{equation}
First assume that  $s_1=0$.  Since $t\leqslant2\rho-2=8$, we conclude that  $t=8$,  $\epsilon=1$ and  $s_2=1$.
By Lemma \ref{lov}, the vertices in $S_2\cup S_2'$ have  degree at least $8$.
On the other hand, the degree of any vertex of $S_2$ and $S'_2$ is at most $p_1+t_1$ and $p_1+t_2$, respectively.
By (\ref{s1p1}), $p_1=4$ and as $t_1+t_2=8$, we conclude that $t_1=t_2=4$ and  every vertex in  $P_1$ is adjacent to every vertex in $S_2\cup S_2'$.
This   shows that there is no edge between $P_1$ and  $T$ which in turn implies that   $G$ is bipartite, a contradiction.
Now,  suppose that $s_1\geqslant1$. If $p_1=0$, then  $s_2=0$ and so
there is no edge between $P_2$ and  $T$ which again implies that   $G$ is bipartite, a contradiction.
Hence $p_1\geqslant1$ and so by (\ref{s1p1}),  $s_1\leqslant3$. This, in view of (\ref{s1t1}), implies that $t\geqslant5$.
There is a vertex  $v\in P_2$ of degree $2$ in $K$  with a neighbor in $S_1$. To see this, note that $K$ is reduced and so we can view the vertices of $S_2\cup P_2$ as distinct nonempty subsets of $S_1\cup P_1$. If there does not exist such a vertex $v$, then $|S_2\cup P_2|\leqslant12$ implying that $k\leqslant16$ which is impossible as $k>c(8)=16$.
Since $v$ has a neighbor in $S_1$ and  $G$ is triangle-free, we deduce that  $N(v)\subseteq S_1\cup S_1'\cup P_1$. It follows from  Lemma \ref{lov}\,(i)  that $t\leqslant4$, contradicting (\ref{s1t1}). This contradiction establishes that $s_2\geqslant2$.

Since $n\geqslant c(r)$ and $k\leqslant b(2\rho-2)$, we obtain that
\begin{eqnarray}
\label{stb}  s+t=n-k-\epsilon \geqslant 2^{\rho-2}+2-\epsilon.
\end{eqnarray}
For any pair $u,v\in S_2$, we have $t\leqslant |N(u)\triangle N(v)|\leqslant p_1$. By  (\ref{stb}) and Theorem \ref{single},
\begin{align*}
2^{\rho-2}+2-\epsilon&\leqslant s+t\\&=\rho-1-p_1+s_2+t\\&\leqslant\rho-1+s_2\\
&\leqslant \rho-1+2^{p_1-t+1} \\
&=\rho-1+2^{\rho-s_1-t}.
\end{align*}
Since  $\rho\geqslant5$, we have  $s_1+t\leqslant2$.
Towards a contradiction,  assume that  $t=2$.
Then $s_1=0$, so that $p_1=\rho-1$ by (\ref{s1p1}). If some $v\in P_1$ has a neighbor in $T$, then, since $G$ is triangle-free, the neighborhood of each vertex in $S_2$ is a subset of $P_1\setminus\{v\}$ and hence has size at most $p_1-1=\rho-2$.
Thus  by Theorem \ref{single}, $s_2\leqslant2^{\rho-3}$ which
contradicts   (\ref{stb}). So there is no  edge between $T$ and $P_1$. Since $G$ is not bipartite,  there is an  edge with endpoints  in $T$.
Since $G$ is triangle-free, Lemma \ref{lov}\,(ii) implies that    $\epsilon=0$. From   (\ref{stb}) and Theorem \ref{single}, we obtain that   $s_2=2^{\rho-2}$.
Since $n\geqslant c(r)$ and $k\leqslant b(2\rho-2)$, we obtain that   $p_2=2^{\rho-2}-1$.
By Theorem \ref{single}, the neighborhoods of vertices of $P_2$ (respectively,  $S_2$) in $P_1$ correspond to odd-size  (respectively, even-size) subsets of $P_1$.
Let $T=\{a_1, a_2\}$. Since $G$ is triangle-free and there is an edge in $T$, we may assume that $T_1=\{a_1\}$ and $T_2=\{a_2\}$.
If $a_2$ is adjacent to a vertex  $x\in P_2$, then  Theorem \ref{single}\,(iii) implies that there exists a vertex $y\in S_2'$ such that $|N(x)\triangle N(y)|=1$ which is impossible by  Lemma \ref{lov}\,(i).  Therefore  $N(a_2)=S_2'$.
Now $G-(S_2'\cup\{a_2\})$ is a  bipartite graph with  bipartition  $\{P_1\cup\{a_1\}, S_2\cup P_2\}$, and is reduced by Theorem \ref{single}.
Since the number of vertices of $G-(S_2'\cup\{a_2\})$ is larger than $b(2\rho-2)$, Theorem \ref{bi} implies that $2\rho\leqslant\mathrm{rank}(G-(S_2'\cup\{a_2\}))$. On the other hand, by Lemma  \ref{jad}\,(i), $\mathrm{rank}(G-(S_2'\cup\{a_2\}))\leqslant r-2$. These give $2\rho\leqslant r-2$ which is impossible.

Therefore   $t=1$. From  $s_1+t\leqslant2$, we have  $s_1\leqslant1$. Suppose that   $s_1=0$. As $G$ is not bipartite, there must be an edge between $T$ and $P_1$. So there is a vertex in $P_1$ with no neighbor in $S_2$.  Now, Theorem \ref{single} and  (\ref{stb}) imply  that  $s_2=2^{\rho-2}$. This is impossible since $K$ is reduced.
Hence   $s_1=1$, so
by (\ref{s1p1}),   $p_1=\rho-2$ and as  $K$ is reduced, we clearly have $s_2\leqslant2^{\rho-2}-1$.
Also, by (\ref{stb}), we have  $2^{\rho-2}-\epsilon\leqslant s_2$.   Therefore,  $s_2=2^{\rho-2}-1$ and $\epsilon=1$. Since $n\geqslant c(r)$ and $k\leqslant b(2\rho-2)$, we have  $p_2=2^{\rho-2}$, $n=c(r)$ and   $k=b(2\rho-2)$.  Thus, by Theorem \ref{bi},  $K$ is isomorphic to ${\cal B}_{\rho-1}$. As $t=\epsilon=1$, it is obvious  that
$\mathrm{rank}(G)=\mathrm{rank}(K)+2$. Therefore $r$ is even and
the definition of $\mathcal{C}_r$ shows  that $G$ is isomorphic to $\mathcal{C}_r$.
}\end{proof}

\section*{Acknowledgments}
This research  was in part supported by grants from IPM to  the first  author (No.\,91050114) and   the second   author (No.\,91050405).
The authors thank anonymous referees for their   valuable comments and suggestions  which dramatically improved the presentation of the article.

{}

\end{document}